\renewcommand\section{\@startsection {section}{1}{\z@}%
                                   {-3.5ex \@plus -1ex \@minus -.2ex}%
                                   {2.3ex \@plus.2ex}%
                                   {\centering\normalfont\bf}}
 \numberwithin{equation}{section}
\numberwithin{equation}{section}
\numberwithin{equation}{section}
\theoremstyle{plain}
\newtheorem{thm}{Theorem}[section]
\newtheorem{lemma}[thm]{Lemma}
\newtheorem{cor}[thm]{Corollary}
\newtheorem{ex}[thm]{Example}
\newtheorem{de}[thm]{Definition}
\begin{document}

\title{Spectrality and non-spectrality of a class of Moran measures with three-element digits}

\author{Xiao-Yu Yan}
\author {Wen-Hui Ai $^*$}

\date{\today}
\keywords {Moran measure, Spectral and non-spectral measure, Spectrum, Fourier transform.}
\subjclass[2010]{Primary 28A80.}
\thanks{This work was supported by the NNSF of China (Nos. 12201206 and 12371072) and Hunan Provincial Natural Science Foundation of China (No. 2024JJ6301).\\
$^*$Corresponding author.}
\address{Key Laboratory of Computing and Stochastic Mathematics (Ministry of Education), School of Mathematics and Statistics, Hunan Normal University, Changsha, Hunan 410081, P.R. China.}

\email{xyyan1103@163.com}
\email{awhxyz123@163.com}
\maketitle \vskip 0.05in

\begin{abstract}
A Borel probability measure \( \mu \) with compact support on \( \mathbb{R}^n \) is called spectral measure if there exists a discrete set \( \Lambda \subset \mathbb{R}^n \) such that \( E_\Lambda := \{e^{2\pi i \langle \lambda, x \rangle}: \lambda \in \Lambda\} \) forms an orthonormal basis of \( L^2(\mu) \). In this paper, we study the spectrality and non-spectrality of a class of Moran measures with three-element digits on \( \mathbb{R} \).
Let
$p_n\in 3\mathbb Z\setminus\{0\}$ and $\mathcal{D}_n=\{0,a_n,b_n\}$ with $\{a_n,b_n\}=\{-1,1\}\pmod 3$.
It is know that the infinite convolution of uniformly discrete probability measures
$$\mu_{\{p_n\},\{\mathcal D_n\}}:=\delta_{p_1^{-1}\{0,a_1,b_1\}}\ast\delta_{(p_1p_2)^{-1}\{0,a_2,b_2\}}\ast\cdots $$
is a Moran measure with compact support if and only if
\begin{align*}
\sum_{n=1}^{\infty}|p_{1}p_{2}\cdots p_n|^{-1}d_n<\infty,\quad \mbox{where}\;d_n=\max\{0,|a_n|, |b_n|\}.
\end{align*}
Without the condition $\sup_{n\geq 1}\{\frac{|a_n|+|b_n|}{|p_n|}\}<\infty$, we give two sufficient conditions under which that $\mu_{\{p_n\},\{\mathcal D_n\}}$ is a spectral measure. If  $p_n=p>2$ and $\mathcal{D}_n=\{0,a_n,b_n\}$ with $\gcd(a_n,b_n)=1$, we also find an useful condition to guarantee that $\mu_{p,\{\mathcal D_n\}}$ is not a spectral measure. Our results extend some known theorems in An et al. [JFA, 2019] and Lu et al. [JFAA, 2022].
\end{abstract}

\medskip


\section {Introduction}
Let $\mu$ be a Borel probability measure on $\mathbb{R}^n$ and $\left\langle \cdot \right\rangle$ denote the standard inner product on $\mathbb R^n$. It is said that $\mu$ is a $spectral$ $measure$ if there exists a countable set $\Lambda\subset \mathbb{R}^n$ such that $E_\Lambda:=\{e^{-2\pi i\left\langle\lambda, x\right\rangle}:\lambda\in\Lambda\}$ forms an orthonormal basis for $L^2(\mu)$. In this case, the set $\Lambda$ is called a $spectrum$ of $\mu$. If the normalized Lebesgue measure restricting on a domain $\Omega\subset \mathbb{R}^n$ is a spectral measure, then we say $\Omega$ is a spectral set. The study of spectral measures originated from Fuglede conjecture \cite{Fuglede}.

{\bf  Fuglede conjecture}. A bounded measurable subset $\Omega\subset \mathbb{R}^n$ is a spectral set if and only if $\Omega$ is a tile.

Although the conjecture was disproved eventually \cite{Tao,FMM06, KM06}, it absorbs more attention on when it is true, see \cite{LM22}.
However, people also want to know whether there is a spectral measure that is not absolutely continuous with the Lebesgue measure. Fortunately, in 1998, Jorgensen and Pedersen \cite{JP98} discovered that some self-similar measures may also have spectra. A simple example is that the self-similar measure supported on quarter Cantor set. Following this surprising discovery, the spectrality and non-spectrality of various singular fractal measures, such as self-affine measures and Cantor-Moran measures, have been extensively studied, see \cite{AFL19,DHL14,DHL19,LW02,LMW22,Str00,Wu24} and references therein for details.
The aim of this paper is to study the spectrality and non-spectrality of a class Moran measures.
Moran measures are a fundamental class of fractal measures which is a probability measure defined on a Moran set, where mass is distributed iteratively according to a fixed set of "weights" \cite{F,Hut81}.

\begin{de}\label{de1.1}
Let $\{R_k\}_{k=1}^\infty$ be a sequence of $n\times n$ real expanding matrices, and let $\{\mathcal{D}_k\}_{k=1}^\infty$ be a sequence of digit sets in $\mathbb{R}^n$ satisfying
$2\leq \# \mathcal{D}_k<\infty$, where $\# \mathcal D_k$ is the cardinality of $\mathcal D_k$. Write $\mathcal{R}_j=R_j\cdots R_2R_1$ and $\delta_E=\frac1{\# E}\sum_{e\in E}\delta_e$, where $\delta_{e}$ is the Dirac measure at the point $e$. If  $\mathcal{U}_k:=\delta_{\mathcal{R}_1^{-1}\mathcal{D}_1}*\delta_{\mathcal{R}_2^{-1}\mathcal{D}_2}*\cdots*\delta_{\mathcal{R}_k^{-1}\mathcal{D}_k}$
converges weakly to a Borel probability measure, then we call this measure to be Moran measure and denoted it by
\begin{align}\label{(1.1)}
\mu_{\{ R_n\},\{\mathcal{D}_n\}}:=\delta_{\mathcal{R}_1^{-1}\mathcal{D}_1}*\delta_{\mathcal{R}_2^{-1}\mathcal{D}_2}*\cdots*\delta_{\mathcal{R}_k^{-1}\mathcal{D}_k}*\cdots.
\end{align}
\end{de}
The first Moran spectral measure was constructed by Strichartz \cite{Str00}. Subsequently, many Moran spectral measures have been found, see \cite{AH14,AFL19,AHH19,HH17,LLZ23,Shi19,Y22} and so on. In particular,  the spectrality of the following infinite convolution $\mu_{\{p_n\},\{\mathcal{D}_n\}}$ with three elements has been extensively studied in a series of paper \cite{AHH19,Ding,FW21,FDLW19,LDZ22,WYZ24,WDL18}.

Let $a_n$, $b_n$, $p_n\in\mathbb{Z}\setminus\{0\}$ with $|p_n|>1$ for $n\geq1$, and write
\begin{align*}
\mathcal{D}_n=\{0,a_n,b_n\},\;\;\; P_n=p_1\cdots p_n.
\end{align*}
It can be seen from \cite[Corollary 1.2]{LMW22}(see our Theorem \ref{thm2.3}) that the infinite convolution
$\mu_{\{p_n\},\{\mathcal D_n\}}$ is a Moran measure with compact support if and only if
\begin{align*}
\sum_{n=1}^{\infty}|P_n|^{-1}d_n<\infty,\quad \mbox{where}\;d_n=\max\{0,|a_n|, |b_n|\}.
\end{align*}
Many criteria for spectrality of $\mu_{\{p_n\},\{\mathcal{D}_n\}}$ have been established. In \cite{Ding}, Ding consider the case $\#\{a_n,b_n: n\geq1\}<\infty$ while Wang et al. \cite{WDL18} assume $\lim\sup_{n\to \infty}\frac{|a_n|+|b_n|}{|p_n|}<\infty$.  Fu et al. \cite{FDLW19} showed that $\mu_{\{p_n\},\{\mathcal D_n\}}$ is a spectral measure when  the sequence $\{\frac{|a_n|+|b_n|}{|p_n|}\}_{n=1}^\infty$ is bounded and $p_n\to\infty$ as $n$ tends infinity.
An et al. \cite{AHH19} proved that if $0<a_n<b_n$ and $\gcd(a_n,b_n)=1$ for all $n\geq1$, and $b_n<p_n$ for $n$ large enough, then $\mu_{\{p_n\},\{\mathcal D_n\}}$ is a spectral measure.
Lu et al. \cite{LDZ22} establish a necessary and sufficient condition for $\mu_{\{p_n\},\{\mathcal D_n\}}$ to be a spectral measure under the conditions $\gcd(a_n,b_n)=1$ and $\sup_{n\geq1}\{\frac{|a_n|}{|p_n|},\frac{|b_n|}{|p_n|}\}<\infty$.
As far as we know, all these results depend on the hypothesis $\sup_{n\geq 1}\{\frac{|a_n|+|b_n|}{|p_n|}\}<\infty$.
In this paper, contrast to these known results, there is no assumption $\sup_{n\geq 1}\{\frac{|a_n|+|b_n|}{|p_n|}\}<\infty$ in our setting.

The main results are as follows.

\begin{thm}\label{thm1.2}
Let $a_n,b_n\in\mathbb Z$ and $p_n\in 3\mathbb Z\setminus\{0\}$ satisfy $\{a_n,b_n\}=\{-1,1\}\pmod 3$. Assume that there exist large $N\in \mathbb N^{+}$ and constants $c_1 \geq c_2+1>2, c_{3}>1$ such that $|p_n|\geq c_{1}n$, $|a_n|\leq c_{2}n$ and $|b_n|\leq c_{3}n^k$ for $n>N$ and some $k\in\mathbb N^+$. Then $\mu_{\{p_n\},\{\mathcal D_n\}}$ is a spectral measure with a spectrum
\[
\Lambda = \left\{ \frac{1}{3} \sum_{k=1}^n d_k p_1 p_2 \cdots p_k: \text{ all } d_k \in \{-1, 0, 1\} \text{ and } n \geq 1 \right\}.
\]
\end{thm}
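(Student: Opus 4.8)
\emph{Proof strategy.}
The plan is to realize $\Lambda$ as an increasing union of spectra of the truncations $\mu_n:=\delta_{P_1^{-1}\mathcal D_1}\ast\cdots\ast\delta_{P_n^{-1}\mathcal D_n}$ and then promote it to a spectrum of $\mu$ by a uniform tail estimate. Write $m_n(x)=\frac13\big(1+e^{-2\pi i a_nx}+e^{-2\pi i b_nx}\big)$, so that $\hat\mu(\xi)=\prod_{n\ge1}m_n(\xi/P_n)$. \emph{Step 1 (the residue hypothesis).} Since $m_n$ has period $1$ and $\{a_n,b_n\}\equiv\{1,2\}\pmod 3$, one has $m_n(\pm\tfrac13)=\tfrac13(1+\omega+\omega^2)=0$ with $\omega=e^{2\pi i/3}$; as $p_n\in 3\mathbb Z$ this is precisely the statement that $\big(p_n,\mathcal D_n,\{0,p_n/3,2p_n/3\}\big)$ is a Hadamard triple, and it is the only place where $\{a_n,b_n\}\equiv\{-1,1\}\pmod 3$ enters. \emph{Step 2 ($\Lambda_n=\{\frac13\sum_{k=1}^n d_kP_k:d_k\in\{-1,0,1\}\}$ is a spectrum of $\mu_n$).} If two such sums agree and $m'$ is the least index where the digit strings differ, then $(d_{m'}-d_{m'}')P_{m'}$ is divisible by $P_{m'+1}$, forcing $p_{m'+1}\mid(d_{m'}-d_{m'}')$, impossible since $0<|d_{m'}-d_{m'}'|\le 2<|p_{m'+1}|$; hence $\#\Lambda_n=3^n$. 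For orthogonality, for $\lambda\ne\lambda'\in\Lambda_n$ and $m'$ the least differing index, the relation $P_k/P_{m'}\in 3\mathbb Z$ for $k>m'$ gives $(\lambda-\lambda')/P_{m'}\equiv\frac13(d_{m'}-d_{m'}')\pmod{\mathbb Z}$, and since $d_{m'}-d_{m'}'\in\{\pm1,\pm2\}$ the factor $m_{m'}\big((\lambda-\lambda')/P_{m'}\big)$ vanishes by Step 1, so $\hat\mu_n(\lambda-\lambda')=0$. Thus $E_{\Lambda_n}$ is an orthonormal system of $3^n$ vectors in $L^2(\mu_n)$, a space of dimension at most $\#\mathcal D_1\cdots\#\mathcal D_n=3^n$, hence an orthonormal basis.

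Because $0\in\{-1,0,1\}$ the $\Lambda_n$ are nested with $\bigcup_n\Lambda_n=\Lambda$, and since any two elements of $\Lambda$ lie in a common $\Lambda_n$ the orthogonality computation of Step 2 applies verbatim to the infinite product $\hat\mu$; so $E_\Lambda$ is an orthonormal set in $L^2(\mu)$. It then remains to prove completeness, equivalently (by the Jorgensen--Pedersen criterion) that $Q_\Lambda(\xi):=\sum_{\lambda\in\Lambda}|\hat\mu(\xi+\lambda)|^2\equiv1$. This is where the growth hypotheses are needed, and where the absence of the usual assumption $\sup_n(|a_n|+|b_n|)/|p_n|<\infty$ has to be circumvented; I expect this to be the main obstacle.

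\emph{Step 3 (completeness).} Decompose $\mu=\mu_n\ast\widetilde\mu_n$ with $\widehat{\widetilde\mu}_n(\xi)=\prod_{k>n}m_k(\xi/P_k)=\widehat\nu_n(\xi/P_n)$, where $\widehat\nu_n(u)=\prod_{j\ge1}m_{n+j}\big(u/(p_{n+1}\cdots p_{n+j})\big)$ is the Fourier transform of the dilated tail. Using $\sum_{\lambda\in\Lambda_n}|\hat\mu_n(\xi+\lambda)|^2=1$ and $\Lambda_n\uparrow\Lambda$, the crux is a uniform positive lower bound for $\widehat\nu_n$ at the arguments $(\xi+\lambda)/P_n$, $\lambda\in\Lambda_n$. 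First, $\lambda/P_n=\frac13\sum_{k=1}^n d_k/(p_{k+1}\cdots p_n)$ together with $|p_k|\to\infty$ gives $|(\xi+\lambda)/P_n|\le\frac13+o(1)$, so every argument $t=(\xi+\lambda)/(P_np_{n+1}\cdots p_{n+j})$ satisfies
\[
|a_{n+j}t|\ \le\ \frac{|a_{n+j}|}{3\,|p_{n+1}\cdots p_{n+j}|}\,(1+o(1))\ \le\ \frac{c_2}{3c_1}\,(1+o(1))\ <\ \frac13
\]
for all large $n$ and every $j\ge1$, precisely because $|p_{n+j}|\ge c_1(n+j)$, $|a_{n+j}|\le c_2(n+j)$ and $c_1\ge c_2+1$. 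Hence the triangle inequality $|1+e^{-2\pi i a_{n+j}t}+e^{-2\pi i b_{n+j}t}|\ge 2\cos(\pi|a_{n+j}t|)-1$ — an estimate that is completely insensitive to $b_{n+j}$, which is exactly where the unboundedness of $|b_n|/|p_n|$ is absorbed — gives $|m_{n+j}(t)|\ge\delta_0>0$ for an absolute constant $\delta_0$ and every $j$; and for $j$ past the degree $k$ in $|b_n|\le c_3n^k$ the denominator $|p_{n+1}\cdots p_{n+j}|\ge c_1^{\,j}(n+1)\cdots(n+j)$ dominates $(n+j)^k$, so $|a_{n+j}t|+|b_{n+j}t|$ is $o(1)$ and summable in $j$, whence $\prod_{j>k}m_{n+j}(t)\to1$. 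Multiplying the at most $k$ ``bad'' factors against this convergent tail yields $|\widehat\nu_n((\xi+\lambda)/P_n)|\ge\epsilon_1>0$ uniformly in $\lambda\in\Lambda_n$ for all large $n$, i.e. the dilated tails form an equi-positive family on the relevant set. Feeding this equi-positivity into the infinite-convolution (admissibility/equi-positivity) argument for Hadamard triples, in the form recalled in Section~2, upgrades the orthonormal set $E_\Lambda$ to an orthonormal basis of $L^2(\mu)$, giving $Q_\Lambda\equiv1$ and hence the theorem. The delicate point is that $|\widehat\nu_n|$ is bounded below only by a constant (not by something tending to $1$) and the supports of the dilated tails are not uniformly bounded, so one must check that the bookkeeping in the equi-positivity step still closes; this is the part where the hypotheses $c_1\ge c_2+1$ and the polynomial bound $|b_n|\le c_3n^k$ are used in an essential and new way.
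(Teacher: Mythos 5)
Your proposal is correct and follows essentially the same route as the paper: orthogonality and the counting argument for $\Lambda_n$ as a spectrum of $\mu_n$, then a uniform lower bound on the tail $|\hat\mu_{>n}(\xi+\lambda)|$ obtained by using only the bound $|a_{n+j}(\xi+\lambda)/P_{n+j}|<\tfrac13$ (via $c_1\ge c_2+1$) on the finitely many factors where $b_{n+j}$ is uncontrolled, and the factorial-versus-polynomial domination $(n+j)^k\ll c_1^{\,j}(n+1)\cdots(n+j)$ to make the remaining factors uniformly close to $1$, before invoking the standard truncation-plus-equi-positivity criterion. The only bookkeeping difference is that the cutoff separating ``bad'' factors from the convergent tail should be a constant $\gamma=\gamma(k,c_1,c_3)$ rather than $k$ itself, exactly as in the paper's choice of $\gamma$.
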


The proof of our Theorem \ref{thm1.2} is similar in spirit to the main results in \cite{AHH19,Ding,FW21,FDLW19,LDZ22,WYZ24,WDL18}, the key is to estimate the lower bound of the tail term of Fourier transform of $\mu_{\{p_n\},\{\mathcal D_n\}}$. However, Wang et al. \cite{WDL18}, Fu et al. \cite{FDLW19} and An et al. \cite{AHH19} used the condition $\sup_{n\geq 1}\{\frac{|a_n|+|b_n|}{|p_n|}\}<\infty$ to obtain that the tail term is greater than a positive constant. Through careful analysis, we find the fine structure of $p_n,\mathcal D_n$ rather than the boundedness condition can also make $\mu_{\{p_n\},\{\mathcal D_n\}}$ a spectral measure which is completely different from the existing spectral measures. Our viewpoint sheds some new light on the necessary and sufficient conditions for the spectrality of Moran measures with three-element digits.

Without the condition $\sup_{n\geq 1}\{\frac{|a_n|+|b_n|}{|p_n|}\}<\infty$, it is worth pointing out that if we write $\mu_{\{p_n\},\{\mathcal D_n\}}$ as an infinite convolution of some Dirac measures (see \eqref{b}), then the tail term $\nu_{>n_j}:=\delta_{p_{n_j+1}^{-1}\mathcal D_{n_j+1}}*\delta_{(p_{n_j+1} p_{n_j+2})^{-1}\mathcal D_{n_j+2}}*\cdots$ may not converges weakly. Lu et al. \cite{LDZ22} need that $\nu_{>n_j}$ has a weak limit $\nu$ to construct an admissible family or an equi-positive family. Although we can't construct an admissible family at present, we can still use some techniques to find an equi-positive family, which is the core strategy for proving the spectrality of the following $\mu_{\{p_n\},\{\mathcal D_n\}}$.
For the convenience,
suppose that $\{a_n,b_n\}\equiv\{a_n',b_n'\}\pmod {p_n}$, where $\{a_n',b_n'\}\subset\{0,1,\cdots ,p_n-1\}$.
Write $\alpha_n=|\frac{a_n-a_n'}{p_n}|$ and $\beta_n=|\frac{b_n-b_n'}{p_n}|$.

\begin{thm}\label{thm1.3}
Let $a_n,b_n\in\mathbb Z$ and $p_n\in 3\mathbb Z\setminus\{0\}$ satisfy $\{a_n,b_n\}=\{-1,1\}\pmod 3$. Assume $\lim_{n\rightarrow\infty}\frac{\max\{\alpha_n,\beta_n\}}{|p_{n-1}|}=0$. If $\liminf_{n\rightarrow\infty} |\frac{a_n}{p_n}|<\frac23$ or $\liminf_{n\rightarrow\infty} |\frac{b_n}{p_n}|<\frac23$, then $\mu_{\{p_n\},\{\mathcal D_n\}}$ is a spectral measure.
\end{thm}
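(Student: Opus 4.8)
\emph{Set-up and orthonormality.} Write $\hat\mu(\xi)=\prod_{l\ge1}m_{\mathcal D_l}(\xi/P_l)$ with $m_{\mathcal D_l}(x)=\tfrac13\bigl(1+e^{-2\pi i a_lx}+e^{-2\pi i b_lx}\bigr)$. Since $3\mid p_n$ and $\{a_n,b_n\}\equiv\{-1,1\}\pmod3$, each triple $(p_n,\mathcal D_n,\{0,p_n/3,-p_n/3\})$ is a Hadamard triple, because $m_{\mathcal D_n}$ vanishes at $\pm\tfrac13$ and $\pm\tfrac23$: e.g. $m_{\mathcal D_n}(\tfrac13)=\tfrac13(1+e^{-2\pi i/3}+e^{2\pi i/3})=0$, as one of $a_n,b_n$ is $\equiv1$ and the other $\equiv-1\pmod3$. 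The induced candidate spectrum is the set $\Lambda=\bigcup_{n\ge1}\Lambda_n$ of Theorem \ref{thm1.2}, with $\Lambda_n=\{\tfrac13\sum_{k=1}^nd_kP_k:d_k\in\{-1,0,1\}\}$. If $\lambda\neq\lambda'$ in $\Lambda$ and $j$ is the first index at which their $d$-coefficients differ, then $(\lambda-\lambda')/P_j\equiv\tfrac13(d_j-d_j')\pmod{\mathbb Z}$ (because $p_{j+1},p_{j+2},\dots\in3\mathbb Z$), and since $d_j-d_j'\not\equiv0\pmod3$ while $a_j+b_j\equiv0\pmod3$, the $j$-th factor $m_{\mathcal D_j}$ vanishes there, so $\hat\mu(\lambda-\lambda')=0$. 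The same computation gives $\#\Lambda_n=3^n$ and shows $\Lambda_n$ is a spectrum of $\mu_{\le n}:=\delta_{P_1^{-1}\mathcal D_1}\ast\cdots\ast\delta_{P_n^{-1}\mathcal D_n}$; in particular $E_\Lambda$ is orthonormal in $L^2(\mu)$. No quantitative hypothesis is used so far — they enter only through the completeness of $E_\Lambda$.

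\emph{Reduction to an equi-positive family.} Write $\mu=\mu_{\le n}\ast\mu_{>n}$ and $\hat\mu_{>n}(\xi)=\hat\nu_{>n}(\xi/P_n)$, where $\nu_{>n}=\delta_{p_{n+1}^{-1}\mathcal D_{n+1}}\ast\delta_{(p_{n+1}p_{n+2})^{-1}\mathcal D_{n+2}}\ast\cdots$. By the equi-positive-family criterion of \cite{AHH19,LDZ22}, built on the nested spectra $\Lambda_n$ and the Jorgensen--Pedersen completeness test, $\mu$ is spectral provided one exhibits an increasing sequence $n_j\to\infty$ along which the tail measures $\{\nu_{>n_j}\}_j$ form an equi-positive family — a uniform-in-$j$ positivity property for $\hat\nu_{>n_j}$ keyed to the zero sets of the masks. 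This is precisely the step at which \cite{LDZ22} invokes a weak limit of $\{\nu_{>n}\}$; when $\tfrac{|a_n|+|b_n|}{|p_n|}$ is unbounded such a limit need not exist, so the equi-positive family must be produced directly.

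\emph{Using the two hypotheses.} Suppose $\liminf_n|a_n/p_n|<\tfrac23$ (the $b_n$ case is identical), and choose $\delta>0$ and an increasing sequence $n_j$ with $|a_{n_j+1}/p_{n_j+1}|\le\tfrac23-\delta$. The role of this choice is that the leading mask $m_{\mathcal D_{n_j+1}}(\cdot/p_{n_j+1})$ of $\hat\nu_{>n_j}$ then stays a definite distance from its vanishing configuration $\{a_{n_j+1}x,b_{n_j+1}x\}\equiv\{\tfrac13,\tfrac23\}\pmod1$ on the ``fractional'' part of its argument (the threshold $\tfrac23$ being exactly what keeps $a_{n_j+1}x$ inside $(-\tfrac13,\tfrac13)$ there), so that — after also accounting, via the reduction $\{a_{n_j+1},b_{n_j+1}\}\equiv\{a_{n_j+1}',b_{n_j+1}'\}\pmod{p_{n_j+1}}$ and the carry $\alpha_{n_j+1}$, for the ``integral'' part of the argument — one obtains the first of the two uniform bounds required for equi-positivity. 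The hypothesis $\lim_n\tfrac{\max\{\alpha_n,\beta_n\}}{|p_{n-1}|}=0$ then handles the infinitely many remaining masks $m_{\mathcal D_l}(\cdot/(p_{n_j+1}\cdots p_l))$, $l>n_j+1$: passing from $|a_l|,|b_l|$ to the reduced data via $|a_l|\le|a_l'|+|p_l|\alpha_l\le|p_l|(1+\alpha_l)$ (and likewise for $b_l$) and using $|P_n|\ge3^n$, the elementary bound $|m_{\mathcal D_l}(x)|\ge1-\tfrac{2\pi}{3}(|a_l|+|b_l|)|x|$ shows their product is $\ge\varepsilon_2>0$ uniformly in $j$ once the carry quotients are small, the first one or two of them being treated, as above, by staying off the zero set rather than near $1$. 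Together this yields the equi-positivity of $\{\nu_{>n_j}\}_j$, hence the completeness of $E_\Lambda$, hence that $\mu_{\{p_n\},\{\mathcal D_n\}}$ is a spectral measure (with spectrum $\Lambda$).

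\emph{Main obstacle.} The heart of the matter is this last, mixed lower bound for the tail: quantitatively matching the threshold $\tfrac23$ and the carry condition to the zero set of the three-term mask $m_{\mathcal D}$, keeping careful track of the reduction mod $p_n$, the carries $\alpha_n,\beta_n$, and the mod-$3$ residues of $a_n,b_n$, so that finitely many ``dangerous'' masks stay uniformly bounded below while the infinite tail of the rest stays uniformly close to $1$ — all without the weak convergence of $\{\nu_{>n}\}$ that \cite{LDZ22} relied on. Everything else (orthonormality of $E_\Lambda$, the reduction to equi-positivity, and the $|P_n|\ge3^n$ bookkeeping) is routine.
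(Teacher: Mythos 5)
Your proposal follows essentially the same route as the paper: verify the Hadamard triple $(p_n,\mathcal D_n,\{0,p_n/3,2p_n/3\})$, reduce to exhibiting an equi-positive family $\{\nu_{>n_j}\}$ along a subsequence where $|a_{n_j}/p_{n_j}|$ stays below $\tfrac23$ (handling the leading mask by the $\tfrac23$ threshold and all later masks via the reduction $a_n\equiv a_n'\pmod{p_n}$ together with the carry hypothesis on $\alpha_n,\beta_n$), and invoke the Li--Miao--Wang criterion. The quantitative tail lower bound that you defer as the ``main obstacle'' is exactly the content of the paper's Claim, which splits the product into the factors $j=0$, $j=1$, and $j\ge2$ and bounds each as you outline.
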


Let $p_n=3n^2,\;a_n=\begin{cases}
3n^3+1, & n \text{ is odd,}\\
3n+1, & n \text{ is even.}\end{cases},\;b_n=3n^3+2$. It is easy to see that there are infinitely many $a_n, b_n$ bigger than $p_n$ and
 $\limsup_{n\rightarrow\infty}\frac{a_n}{p_n}=\limsup_{n\rightarrow\infty}\frac{b_n}{p_n}=\infty$.
 Hence, $\mu_{\{p_n\},\{\mathcal D_n\}}$ doesn't satisfy \cite[Theorem 1.3,Theorem 3.8,Theorem 5.1]{AHH19} and \cite[Theorem 1.2,Theorem 1.5]{LDZ22}.
 Despite this, we can verify that
 $$\lim_{n\rightarrow\infty}\frac{\max\{\alpha_n,\beta_n\}}{|p_{n-1}|}=0 \ {\it and}\  \liminf_{n\rightarrow\infty} |\frac{a_n}{p_n}|=0<\frac23.$$
 By Theorem \ref{thm1.3}, $\mu_{\{p_n\},\{\mathcal D_n\}}$ is a spectral measure.
Furthermore, if there exists a finite number of $n$ such that $\max\{a_n,b_n\}>p_n$, then $\lim_{n\rightarrow\infty}\frac{\max\{\alpha_n,\beta_n\}}{|p_{n-1}|}=0$. We are thus led to the  following Corollary.
\begin{cor}
Let $a_n,b_n\in\mathbb Z$ and $p_n\in 3\mathbb Z\setminus\{0\}$ satisfy $\{a_n,b_n\}=\{-1,1\}\pmod 3$. If there exists a finite number of $n$ such that $\max\{a_n,b_n\}>p_n$ and $\liminf_{n\rightarrow\infty} |\frac{a_n}{p_n}|<\frac23$ (or $\liminf_{n\rightarrow\infty} |\frac{b_n}{p_n}|<\frac23$), then $\mu_{\{p_n\},\{\mathcal D_n\}}$ is a spectral measure.
\end{cor}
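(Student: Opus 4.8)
The plan is to obtain the Corollary as an immediate consequence of Theorem~\ref{thm1.3}. The congruence condition $\{a_n,b_n\}=\{-1,1\}\pmod 3$ and one of the two $\liminf$ conditions are assumed outright, so the only hypothesis of Theorem~\ref{thm1.3} that needs checking is the decay condition $\lim_{n\to\infty}\frac{\max\{\alpha_n,\beta_n\}}{|p_{n-1}|}=0$. Once this is verified, Theorem~\ref{thm1.3} applies verbatim and yields that $\mu_{\{p_n\},\{\mathcal D_n\}}$ is a spectral measure.

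To verify the decay condition I would show the stronger statement that $\alpha_n=\beta_n=0$ for all sufficiently large $n$. By hypothesis the set $\{n:\max\{a_n,b_n\}>p_n\}$ is finite, so there is $N$ with $a_n\le p_n$ and $b_n\le p_n$ for every $n>N$. Since $3\mid p_n$ while $a_n\not\equiv 0$ and $b_n\not\equiv 0\pmod 3$ (because $\{a_n,b_n\}=\{-1,1\}\pmod 3$), we get $a_n\neq p_n$ and $b_n\neq p_n$, hence in fact $0\le a_n<p_n$ and $0\le b_n<p_n$ for $n>N$. As $a_n'$ (resp.\ $b_n'$) is by definition the representative of $a_n$ (resp.\ $b_n$) modulo $p_n$ lying in $\{0,1,\dots,p_n-1\}$, this forces $a_n'=a_n$ and $b_n'=b_n$, and therefore
\[
\alpha_n=\Bigl|\frac{a_n-a_n'}{p_n}\Bigr|=0,\qquad \beta_n=\Bigl|\frac{b_n-b_n'}{p_n}\Bigr|=0\qquad(n>N).
\]

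Finally, since $p_{n-1}\in 3\mathbb Z\setminus\{0\}$ we have $|p_{n-1}|\ge 3>0$, so $\frac{\max\{\alpha_n,\beta_n\}}{|p_{n-1}|}=0$ for all $n>N+1$; in particular the limit equals $0$. Thus all hypotheses of Theorem~\ref{thm1.3} hold, and the conclusion follows.

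There is no genuine obstacle here; the single point requiring a moment's attention is that the bound $\max\{a_n,b_n\}\le p_n$ must be upgraded to the \emph{strict} inequalities $a_n<p_n$, $b_n<p_n$, which is exactly where the arithmetic input $3\mid p_n$ (ruling out $a_n=p_n$ or $b_n=p_n$) is used. Implicit in the argument is that the digits $a_n,b_n$ are eventually nonnegative, which is how the displayed-finiteness hypothesis is to be read; if one wishes to allow $p_n<0$ as literally permitted by $p_n\in 3\mathbb Z\setminus\{0\}$, the same computation goes through after replacing $p_n$ by $|p_n|$ throughout.
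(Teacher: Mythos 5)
Your proposal is correct and follows the paper's own route: the paper derives the corollary in exactly this way, observing in one line that finitely many exceptions to $\max\{a_n,b_n\}\le p_n$ force $\lim_{n\to\infty}\frac{\max\{\alpha_n,\beta_n\}}{|p_{n-1}|}=0$ (indeed $\alpha_n=\beta_n=0$ eventually) and then invoking Theorem \ref{thm1.3}. Your additional care --- using $3\mid p_n$ to rule out $a_n=p_n$ or $b_n=p_n$, and flagging that the digits are implicitly read as eventually nonnegative (with $p_n$ replaced by $|p_n|$ if negative) --- only makes explicit what the paper leaves tacit.
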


For the non-spectrality of $\mu_{\{p_n\},\{\mathcal D_n\}}$, An et al. \cite{AHH19} obtained a complex sufficient condition. By restricting $p_n=p$, we can get another sufficient condition. However, our idea is different from \cite[Theorem 1.4]{AHH19} and the non-spectrality is the subtle part of this paper.

\begin{thm}\label{thm1.5}
Let $p,q\in \Bbb Z$ with $p>q\geq2$. Suppose that $p_n=p$ and $\gcd(a_n,b_n)=1$. If there exists a strictly monotone increasing sequence of positive integers $\{\omega_n\}_{n=1}^{\infty}$ and $c>0$ such that $c\cdot q^{\omega_n}\in\{a_{\omega_n},b_{\omega_n}\}$, then $\mu_{p,\{0,a_n,b_n\}}$  is not a spectral measure.
\end{thm}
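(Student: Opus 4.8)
\medskip

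The plan is to argue by contradiction. Suppose $\mu:=\mu_{p,\{0,a_n,b_n\}}$ is spectral with spectrum $\Lambda$; translating, we may take $0\in\Lambda$. Write $\widehat\mu(\xi)=\prod_{n\ge 1}m_n(p^{-n}\xi)$ with $m_n(\xi)=\frac13\bigl(1+e^{-2\pi i a_n\xi}+e^{-2\pi i b_n\xi}\bigr)$, so that $\mathcal Z(\widehat\mu)=\bigcup_{n\ge1}p^{n}\mathcal Z(m_n)$. Since $1+z+w=0$ with $|z|=|w|=1$ forces $\{z,w\}$ to be the two primitive cube roots of unity, and $\gcd(a_n,b_n)=1$, an elementary argument gives $\mathcal Z(m_n)\subseteq\frac13\mathbb Z\setminus\mathbb Z$; because $p\in 3\mathbb Z$ each $p^{n}\mathcal Z(m_n)$ lies in $\mathbb Z$ and the sets $\{p^{n}\mathcal Z(m_n)\}_{n\ge1}$ are pairwise disjoint. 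Hence the orthogonality relation $\Lambda-\Lambda\subseteq\mathcal Z(\widehat\mu)\cup\{0\}$ forces $\Lambda\subseteq\mathbb Z$ and assigns to every $\lambda\in\Lambda\setminus\{0\}$ a well-defined \emph{level} $\ell(\lambda)$, the unique $n$ with $p^{-n}\lambda\in\frac13\mathbb Z\setminus\mathbb Z$; put $p^{-\ell(\lambda)}\lambda=t_\lambda/3$ with $3\nmid t_\lambda$.

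The point of the hypothesis is that the digits $cq^{\omega_n}$ make $\widehat\mu$ abnormally small at the corresponding scales. Passing to a subsequence we may assume $a_{\omega_n}=cq^{\omega_n}$ for all $n$. Since each $m_n$ is $1$-periodic, for $\lambda\in\Lambda$ with $\ell(\lambda)=k$ and any $\xi_0$ one has the factorization
\[
\widehat\mu(\xi_0+\lambda)=\widehat{\mu_{<k}}(\xi_0)\cdot m_k\!\Bigl(p^{-k}\xi_0+\tfrac{t_\lambda}{3}\Bigr)\cdot\widehat{\mu_{>k}}(\xi_0+\lambda),
\]
where $\mu_{<k}=\delta_{p^{-1}\mathcal D_1}\ast\cdots\ast\delta_{p^{-(k-1)}\mathcal D_{k-1}}$ and $\mu_{>k}=\delta_{p^{-(k+1)}\mathcal D_{k+1}}\ast\cdots$. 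As $m_k(t_\lambda/3)=0$ and $\|m_k'\|_\infty\le\frac{2\pi}{3}(|a_k|+|b_k|)$, this yields $|\widehat\mu(\xi_0+\lambda)|\le\frac{2\pi}{3}(|a_k|+|b_k|)\,p^{-k}|\xi_0|$; and at the bad levels $k=\omega_n$,
\[
(|a_{\omega_n}|+|b_{\omega_n}|)\,p^{-\omega_n}=c(q/p)^{\omega_n}+|b_{\omega_n}|p^{-\omega_n}\longrightarrow 0\qquad(n\to\infty),
\]
the first term because $q<p$, the second because $\sum_m p^{-m}\max\{|a_m|,|b_m|\}<\infty$ by the compact-support criterion (Theorem \ref{thm2.3}). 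Thus $\widehat\mu(\xi_0+\lambda)$ is uniformly small whenever $\lambda$ sits at a large bad level.

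I would then derive the contradiction from the Jorgensen--Pedersen completeness relation: if $\Lambda$ is a spectrum then $Q_\Lambda(\xi):=\sum_{\lambda\in\Lambda}|\widehat\mu(\xi+\lambda)|^2\equiv 1$ (and in general $Q_\Lambda\le 1$). Fixing $\xi_0$ in $\mathcal Z(\widehat\mu)$ whose first vanishing factor occurs at a bad level $\omega_N$ --- so that $\widehat{\mu_{<k}}(\xi_0)=0$ and hence all contributions from levels $k>\omega_N$ drop out --- the factorization reorganizes $Q_\Lambda(\xi_0)$ into a sum of level contributions, each carrying the factor $|m_\ell(p^{-\ell}\xi_0+t_\lambda/3)|^2$. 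The contributions of the infinitely many bad levels among $1,\dots,\omega_N$ are then negligible by the estimate above, while the orthogonality constraint on $\Lambda$ keeps the remaining levels from recovering the lost mass; done carefully this gives $Q_\Lambda(\xi_0)<1$, the desired contradiction. Conceptually this is exactly the failure of equi-positivity of the tail measures forced by the digits $cq^{\omega_n}$.

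The step I expect to be the main obstacle is the last one: bounding the contribution of the \emph{good} levels --- where $|a_k|+|b_k|$ may be arbitrarily large, so the decay estimate is useless --- and doing so for an \emph{arbitrary} orthogonal set $\Lambda$, not merely the natural candidate $\Lambda=\{\frac13\sum_{k\ge1}d_k p^k:d_k\in\{-1,0,1\}\}$. The classical machinery (e.g.\ \cite{LDZ22}) is unavailable here because $\{|a_n|/p,|b_n|/p\}$ need not be bounded and the re-based tails are not controlled uniformly; instead one has to track the branching of $\Lambda$ level by level (a spectrum's tree has exactly three children at each node, compatible at that level with $\mathcal D_n$, for which $\sum_{\text{children}}|m_n(\cdot)|^2=1$), and combine this with a careful partial-sum estimate showing that the deficit created at the scales $\omega_n$ is never made up. Establishing this bookkeeping, together with the rigidity needed to reduce a general spectrum to the tree form, is the technical heart of the proof.
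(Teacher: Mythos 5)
There is a genuine gap, and it sits exactly where you flag it --- but the problem is worse than a missing bookkeeping step, because the quantitative mechanism you propose cannot carry the argument. Your decay estimate $|\widehat\mu(\xi_0+\lambda)|\le\frac{2\pi}{3}(|a_k|+|b_k|)p^{-k}|\xi_0|$ at level $k=\ell(\lambda)$ is correct as far as it goes, but the quantity $(|a_k|+|b_k|)p^{-k}$ tends to $0$ along \emph{every} sequence of levels, not just the bad ones: the very existence of the Moran measure (Theorem \ref{thm2.3}) forces $\sum_k p^{-k}\max\{|a_k|,|b_k|\}<\infty$. So the estimate holds verbatim for the classical spectral examples and cannot detect non-spectrality; the special digits $cq^{\omega_n}$ play no role in it. There is also an internal tension in your plan: to kill the levels $k>\omega_N$ you must take $\xi_0\in p^{\omega_N}\mathcal Z(m_{\omega_N})$, hence $|\xi_0|\ge p^{\omega_N}/3$, which makes the right-hand side of your derivative bound enormous rather than small for every $k\le\omega_N$ (one would need to control $p^{-k}\xi_0$ modulo the zero lattice of $m_k$, which is precisely the hard Diophantine information you have not supplied).

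The paper's proof uses a genuinely different mechanism that your sketch does not anticipate. After parametrizing an arbitrary maximal orthogonal set by a maximal mapping (Theorem \ref{thm4.2}), it does not look at the factor $m_{\ell(\lambda)}$ near its zero at all; instead, for words $\mathbf{i}$ of depth between $(nd)^k$ and $((n+1)d)^k$ it exploits that the frequency $\frac13\iota^*(\mathbf{i})$ has modulus at least $p^{(nd)^k}/3$, so the \emph{tail} factors at the bad levels beyond the truncation are evaluated at points of the form $(\beta/\alpha)^j y$ with $\alpha/\beta=(p/q)^d$ and $|y|\ge q^{n^k}$ huge. The engine is then Lemma \ref{lem4.4}, a counting lemma asserting that along the orbit $j\mapsto(\alpha/\beta)^j x$ at least $s$ terms within a controlled window are at distance $\ge\frac1{\alpha\beta}$ from $\mathbb Z$, together with the recursive inequality \eqref{(8.7)} which converts $|y|\ge q^{n^k}$ into $s(n,k,\xi,\mathbf{i})\gtrsim k\log n$. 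Each such term contributes a factor $\gamma<1$, giving the uniform tail bound $|\hat\mu_{>(d(n+1))^k}(\xi+\frac13\iota^*(\mathbf{i}))|\le c_1^\zeta n^{-k\zeta}$, and the good levels are then handled not by a separate estimate but by the orthogonality recursion $1-Q_{n+1,k}\ge(1-Q_{n,k})(1-c_1^{2\zeta}n^{-2k\zeta})$ via Lemma \ref{lem4.3}, with $k$ chosen so the infinite product is positive. None of this Diophantine analysis of the orbit of $(p/q)^{jd}$ modulo $1$ --- which is where the hypothesis $c\cdot q^{\omega_n}\in\{a_{\omega_n},b_{\omega_n}\}$ actually enters --- appears in your proposal, so the proof as sketched does not close.
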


The paper is organized as follows. Section \ref{section2} is devoted to introducing some preliminary results and proving Theorem \ref{thm1.2}. In Section \ref{section3}, we prove Theorem \ref{thm1.3} by constructing equi-positive family. Section \ref{section4} deals with the non-spectrality of $\mu_{\{p_n\},\{\mathcal D_n\}}$ and compare our Theorem \ref{thm1.5} with \cite[Theorem 1.4]{AHH19}.

\section{Preliminaries and proof of Theorem \ref{thm1.2}}\label{section2}

In this section, we give some necessary definitions and lemmas that are used in our proofs. In particular, we prove Theorem \ref{thm1.2}.

Recall that the Fourier transform of a Borel probability measure $\mu$ on $\mathbb{R}$ is defined by
$$
\hat{\mu}(\xi)=\int_{\mathbb{R}} e^{-2\pi i  \xi x}d\mu(x), \quad \xi\in\mathbb R.
$$
Let $a_n$, $b_n$, $p_n\in\mathbb{Z}\setminus\{0\}$ with $|p_n|>1$ for $n\geq1$, and $\mathcal{D}_n=\{0,a_n,b_n\}$.
Then we have
\begin{align}\label{q1}
\hat\mu_{\{p_n\},\{\mathcal D_n\}}(\xi)=\prod_{n=1}^{\infty}M_{\mathcal {D}_n}(P_n^{-1}\xi),
\end{align}
where $P_n=p_1\cdots p_n$ and the mask polynomial
\begin{align}\label{q2}
M_{\mathcal D_n}(\xi)=\frac13(1+e^{-2\pi i a_n \xi}+e^{-2\pi i b_n \xi}).
\end{align}
Denote by ${\mathcal Z}(f) := \{x\in\mathbb R : f(x) = 0\}$ the set of zeros of $f(x)$.
It is easy to see from \eqref{q1} and \eqref{q2} that
\begin{align}\label{q3}
\mathcal{Z}(M_{\mathcal{D}_n})=\frac1{3\gcd(a_n,b_n)}(\Bbb Z\setminus 3\Bbb Z) \quad \text{and}\quad \mathcal{Z}(\hat\mu_{\{p_n\},\{\mathcal D_n\}})=\bigcup_{n=1}^{\infty}\frac{P_n}{3\gcd(a_n,b_n)}(\Bbb Z\setminus 3\Bbb Z).
\end{align}

We say that $\Lambda$ is an orthogonal set of $\mu$ if $E_{\Lambda}=\{e^{-2\pi i\lambda x}:\lambda\in\Lambda\}$ is an orthonormal family for $L^2(\mu)$. It is easy to show that
$\Lambda$ is an orthogonal set for $\mu$ if and only if $\hat{\mu}(\lambda_i-\lambda_j)= 0$ for any two distinct $\lambda_i,\;\lambda_j\in\Lambda$, which is equivalent to
\begin{align}\label{q4}
(\Lambda-\Lambda)\setminus\{0\}\subseteq \mathcal{Z}(\hat{\mu}).
\end{align}
The difficulty in studying the spectrality of fractal measures lies in proving the completeness of the orthogonal set of exponential functions. In \cite{JP98}, Jorgensen and Pedersen provided a criterion that enables us to determine whether an orthogonal set is a spectrum of $\mu$.

\begin{lemma}\cite{JP98}\label{lem2.1}
Let $\mu$ be a probability measure in $\Bbb R^n$ with compact support, and let $\Lambda\subset \Bbb R^n$ be a countable subset. Define
\begin{align*}
Q_{\Lambda}(\xi)=\sum_{\lambda\in \Lambda}|\hat{\mu}(\xi+\lambda)|^2.
\end{align*}
Then the following conclusions hold.

$(i)$\;\;$E_\Lambda$  is an orthonormal set of $L^2(\mu)$ if and only if $Q_{\Lambda}(\xi)\leq 1$ for all $\xi\in\Bbb R^n$; in this case, $Q_{\Lambda}(\xi)$ is an entire function;

$(ii)\;\;\Lambda$ is a spectrum of $\mu$ if and only if $Q_{\Lambda}(\xi)\equiv 1$ for all $\xi\in\Bbb R^n$.
\end{lemma}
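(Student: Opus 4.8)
The plan is to translate the statement about the exponential system $E_\Lambda$ into Hilbert-space language inside $L^2(\mu)$ and then read off both conclusions from Bessel's inequality and the Parseval characterization of an orthonormal basis. Writing $e_t(x)=e^{-2\pi i\langle t,x\rangle}$, the key identity is $\langle e_s,e_t\rangle_{L^2(\mu)}=\int e^{-2\pi i\langle s-t,x\rangle}\,d\mu(x)=\hat\mu(s-t)$; in particular $\|e_t\|_{L^2(\mu)}^2=\hat\mu(0)=1$, so every exponential is a unit vector. Setting $g_\xi:=e_{-\xi}$ (again of unit norm) and using $|\hat\mu(-t)|=|\hat\mu(t)|$, one gets $Q_\Lambda(\xi)=\sum_\lambda|\hat\mu(\xi+\lambda)|^2=\sum_\lambda|\langle g_\xi,e_\lambda\rangle|^2$. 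Letting $P$ be the orthogonal projection of $L^2(\mu)$ onto $\overline{\mathrm{span}}\,E_\Lambda$, whenever $E_\Lambda$ is orthonormal this reads $Q_\Lambda(\xi)=\|Pg_\xi\|^2$. This reformulation drives everything.

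For part (i), the forward direction is immediate: if $E_\Lambda$ is orthonormal, Bessel's inequality applied to $g_\xi$ gives $Q_\Lambda(\xi)=\sum_\lambda|\langle g_\xi,e_\lambda\rangle|^2\le\|g_\xi\|^2=1$. For the converse I would evaluate $Q_\Lambda$ at $\xi=-\lambda_0$ for a fixed $\lambda_0\in\Lambda$: the summand $\lambda=\lambda_0$ equals $|\hat\mu(0)|^2=1$, so the hypothesis $Q_\Lambda(-\lambda_0)\le1$ together with nonnegativity of the remaining terms forces $\hat\mu(\lambda-\lambda_0)=0$ for every $\lambda\ne\lambda_0$. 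This is exactly the orthogonality criterion \eqref{q4}, and unit norm is automatic, so $E_\Lambda$ is orthonormal.

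The entire-function assertion is the genuinely analytic point, and I expect it to be the main obstacle. Since $\mu$ has compact support, $\hat\mu$ extends by Paley--Wiener to an entire function of exponential type on $\mathbb{C}^n$. On the real axis $|\hat\mu(\xi+\lambda)|^2=\hat\mu(\xi+\lambda)\hat\mu(-\xi-\lambda)$ because $\mu$ is a real measure, so the natural entire extension of the $\lambda$-term is $F_\lambda(z)=\hat\mu(z+\lambda)\hat\mu(-z-\lambda)$; the difficulty is that off the real axis the terms are no longer nonnegative, so monotone convergence is unavailable. To obtain locally uniform convergence of $\sum_\lambda F_\lambda$ on $\mathbb{C}^n$ I would, for $z=\xi+i\eta$, write $\hat\mu(z+\lambda)=\langle w_\eta,e_{-\xi-\lambda}\rangle$ with weight $w_\eta(x)=e^{2\pi\langle\eta,x\rangle}$, which is bounded on $\mathrm{supp}\,\mu$ and hence lies in $L^2(\mu)$. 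As $\{e_{-\xi-\lambda}\}_\lambda$ is a unimodular modulation of the orthonormal conjugate system $\overline{E_\Lambda}$, Bessel gives $\sum_\lambda|\hat\mu(z+\lambda)|^2\le\|w_\eta\|_{L^2(\mu)}^2$, a bound locally uniform in $\eta$; the same argument with $\eta$ and $\xi$ negated bounds $\sum_\lambda|\hat\mu(-z-\lambda)|^2$ by $\|w_{-\eta}\|_{L^2(\mu)}^2$. Cauchy--Schwarz then yields $\sum_\lambda|F_\lambda(z)|\le\|w_\eta\|_{L^2(\mu)}\,\|w_{-\eta}\|_{L^2(\mu)}$, so the series converges locally uniformly and $Q_\Lambda$ extends to an entire function.

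Part (ii) is then a short completeness argument. If $\Lambda$ is a spectrum, Parseval's identity applied to $g_\xi$ gives $Q_\Lambda(\xi)=\|g_\xi\|^2=1$ for all $\xi$. Conversely, $Q_\Lambda\equiv1$ forces $Q_\Lambda\le1$, so $E_\Lambda$ is orthonormal by part (i) and $P$ is a well-defined projection; from $\|Pg_\xi\|^2=Q_\Lambda(\xi)=1=\|g_\xi\|^2$ and the decomposition $\|g_\xi\|^2=\|Pg_\xi\|^2+\|(I-P)g_\xi\|^2$ I conclude $Pg_\xi=g_\xi$, i.e. every exponential lies in $\overline{\mathrm{span}}\,E_\Lambda$. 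Finally, because $\mu$ is compactly supported, Stone--Weierstrass shows the exponentials span a dense subspace of $L^2(\mu)$, so $P=I$ and $E_\Lambda$ is complete, i.e. a spectrum.
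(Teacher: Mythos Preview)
Your proof is correct. The paper itself does not supply a proof of this lemma; it merely cites \cite{JP98} and quotes the statement as a known criterion. Your argument follows the standard route: the identification $|\langle e_{-\xi},e_\lambda\rangle_{L^2(\mu)}|=|\hat\mu(\xi+\lambda)|$, Bessel's inequality for (i) forward, evaluation at $\xi=-\lambda_0$ for (i) backward, and Parseval together with Stone--Weierstrass density of exponentials for (ii). The entire-function step is the only place requiring care, and your Paley--Wiener plus Cauchy--Schwarz argument is a clean way to get absolute convergence of $\sum_\lambda \hat\mu(z+\lambda)\hat\mu(-z-\lambda)$ on $\mathbb{C}^n$; to upgrade the uniform bound to locally uniform \emph{convergence} (needed so the limit is holomorphic) one observes that as $z$ ranges over a compact set the vectors $h_z(x)=e^{2\pi\langle\eta,x\rangle}e^{-2\pi i\langle\xi,x\rangle}$ form a compact subset of $L^2(\mu)$, against the fixed orthonormal system $\{\overline{e_\lambda}\}_\lambda$, so the Bessel tails vanish uniformly. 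With that small addition the argument is complete and matches the classical proof in Jorgensen--Pedersen.
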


In this paper, we write
\begin{align}\label{w1}
\mu_n=\delta_{P_1^{-1}\mathcal D_1}*\cdots *\delta_{P_n^{-1}\mathcal D_n}\quad \text{and} \quad \mu_{>n}=\delta_{P_{n+1}^{-1}\mathcal D_{n+1}}*\delta_{P_{n+2}^{-1}\mathcal D_{n+2}}*\cdots.
\end{align}
Then $\mu_{\{p_n\},\{\mathcal D_n\}}=\mu_n*\mu_{>n}$. We define
\begin{align}\label{q5}
\Lambda_n=\sum_{i=1}^{n}\frac{P_i}3\{0,\pm 1\}\quad \text{and}\quad \Lambda=\bigcup_{n=1}^{\infty}\Lambda_n.
\end{align}
\begin{lemma}\label{lem2.2}
Let $a_n,b_n\in\Bbb Z$ and $p_n\in 3\Bbb Z\setminus\{0\}$ satisfy $\{a_n,b_n\}=\{-1,1\}\pmod 3$. Then the set $\Lambda_n$ is a spectrum of the measure $\mu_n$ and $\Lambda$ is an orthogonal set of the measure $\mu_{\{p_n\},\{\mathcal D_n\}}$.
\end{lemma}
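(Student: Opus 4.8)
The plan is to verify the two assertions separately, both reducing to the zero-set description in \eqref{q3} and the Jorgensen--Pedersen criterion in Lemma \ref{lem2.1}. First I would show that $\Lambda_n$ is a spectrum of $\mu_n$. Since $\mu_n = \delta_{P_1^{-1}\mathcal D_1}\ast\cdots\ast\delta_{P_n^{-1}\mathcal D_n}$ is a finite convolution, $L^2(\mu_n)$ is finite-dimensional with $\dim L^2(\mu_n) = 3^n$ (one must note the digit sets are genuinely $3$-element sets, which is guaranteed since $\{a_n,b_n\}=\{-1,1\}\pmod 3$ forces $a_n \ne b_n$ and $a_n,b_n \ne 0$), and $\#\Lambda_n = 3^n$ as well because the representation $\sum_{i=1}^n \frac{P_i}{3}\varepsilon_i$ with $\varepsilon_i \in \{0,\pm1\}$ is unique: the $P_i$ grow fast enough (indeed $P_{i+1}/P_i = p_{i+1} \in 3\mathbb Z$, so $|p_{i+1}| \ge 3$) that distinct digit strings give distinct sums. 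Hence it suffices to check orthogonality, i.e.\ $(\Lambda_n - \Lambda_n)\setminus\{0\} \subseteq \mathcal Z(\hat\mu_n)$.

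For the orthogonality I would compute a difference $\lambda - \lambda' = \sum_{i=1}^n \frac{P_i}{3}(\varepsilon_i - \varepsilon_i')$ with $\varepsilon_i - \varepsilon_i' \in \{0,\pm1,\pm2\}$. Let $m$ be the largest index with $\varepsilon_m \ne \varepsilon_m'$. Then $\lambda - \lambda' = \frac{P_m}{3}\bigl((\varepsilon_m - \varepsilon_m') + 3 P_m^{-1}\sum_{i<m}\frac{P_i}{3}(\varepsilon_i-\varepsilon_i')\bigr)$; since $P_i/P_m = (p_{i+1}\cdots p_m)^{-1}$ and each $p_j \in 3\mathbb Z$, the inner sum $\sum_{i<m}\frac{P_i}{P_m}(\varepsilon_i - \varepsilon_i')$ is an integer (the factor $3$ cancels one power of $3$ from $p_{i+1}$, leaving $p_{i+2}\cdots p_m$ times an integer when $i<m-1$, and for $i=m-1$ the factor $\frac{P_{m-1}}{P_m} = p_m^{-1}$ against the explicit $3$ gives $3/p_m$ times an integer, which is an integer since $3 \mid p_m$). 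Thus $\frac{3}{P_m}(\lambda-\lambda') = (\varepsilon_m - \varepsilon_m') + 3k$ for some $k \in \mathbb Z$, and since $\varepsilon_m - \varepsilon_m' \in \{\pm1,\pm2\}$ this lies in $\mathbb Z \setminus 3\mathbb Z$; recalling $\mathcal Z(M_{\mathcal D_m}) = \frac{1}{3\gcd(a_m,b_m)}(\mathbb Z\setminus 3\mathbb Z) \supseteq \frac13(\mathbb Z\setminus 3\mathbb Z)$, we get $M_{\mathcal D_m}(P_m^{-1}(\lambda-\lambda')) = 0$, hence $\hat\mu_n(\lambda - \lambda') = 0$ by \eqref{q1}. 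Combining the counting and orthogonality with Lemma \ref{lem2.1}(ii) (using that $Q_{\Lambda_n}$ is then a sum of $3^n$ orthonormal squared-moduli integrating to $1$ over the $3^n$-dimensional space, or more simply the standard fact that an orthonormal set of the right cardinality in a finite-dimensional space is a basis) yields that $\Lambda_n$ is a spectrum of $\mu_n$.

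For the second assertion, that $\Lambda = \bigcup_n \Lambda_n$ is an orthogonal set of $\mu_{\{p_n\},\{\mathcal D_n\}}$, I would take two distinct points $\lambda \in \Lambda_n$, $\lambda' \in \Lambda_{n'}$ with, say, $n \le n'$, so both lie in $\Lambda_{n'}$. The same argument as above (now applied at level $n'$) shows $\lambda - \lambda' \in \frac{P_m}{3}(\mathbb Z\setminus 3\mathbb Z)$ for $m$ the top index where their $\Lambda_{n'}$-digit expansions differ, hence $\lambda - \lambda' \in \mathcal Z(M_{\mathcal D_m}(P_m^{-1}\,\cdot\,)) \subseteq \mathcal Z(\hat\mu_{\{p_n\},\{\mathcal D_n\}})$ by the product formula \eqref{q1} and the zero-set description \eqref{q3}. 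By \eqref{q4} this gives $(\Lambda - \Lambda)\setminus\{0\} \subseteq \mathcal Z(\hat\mu_{\{p_n\},\{\mathcal D_n\}})$, so $\Lambda$ is an orthogonal set.

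The main obstacle is the bookkeeping in the orthogonality computation: one must carefully track that $\frac{3}{P_m}(\lambda - \lambda')$ lands in $\mathbb Z\setminus 3\mathbb Z$ rather than merely in $\frac13\mathbb Z$, and this is exactly where the hypotheses $p_n \in 3\mathbb Z$ and $\{a_n,b_n\} = \{-1,1\}\pmod 3$ are used — the former to make the cross terms integral after clearing the denominator $3$, and the latter to guarantee $\gcd(a_m,b_m)$ is coprime to $3$ so that $\mathcal Z(M_{\mathcal D_m})$ contains $\frac13(\mathbb Z\setminus 3\mathbb Z)$. A secondary point to get right is the cardinality count $\#\Lambda_n = 3^n$, i.e.\ the uniqueness of the digit representation, which again follows from $|p_{i+1}| \ge 3$; this is needed so that the orthogonal set $\Lambda_n$ is large enough to be a basis.
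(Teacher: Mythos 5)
Your overall route is the same as the paper's (orthogonality via the zero-set description \eqref{q3} together with \eqref{q4}, then the dimension count $\dim L^2(\mu_n)=3^n=\#\Lambda_n$ and Lemma \ref{lem2.1}), but the central orthogonality computation as written is wrong. You take $m$ to be the \emph{largest} index at which the digit strings differ and factor out $\frac{P_m}{3}$; the leftover cross terms are $\sum_{i<m}\frac{P_i}{P_m}(\varepsilon_i-\varepsilon_i')$ with $\frac{P_i}{P_m}=(p_{i+1}\cdots p_m)^{-1}$, i.e.\ reciprocals of products of the $p_j$, so the claim that this sum is an integer (let alone a multiple of $3$) fails; the parenthetical about the factor $3$ cancelling a power of $3$ from $p_{i+1}$ and ``leaving $p_{i+2}\cdots p_m$ times an integer'' confuses multiplying by these factors with dividing by them. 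Concretely, take $p_1=p_2=3$, $\lambda=\frac{P_1}{3}+\frac{P_2}{3}=4$ and $\lambda'=0$: the largest differing index is $m=2$, and $\frac{3}{P_2}(\lambda-\lambda')=\frac43\notin\mathbb Z$, so your factorization does not place $\lambda-\lambda'$ in $\frac{P_2}{3}(\mathbb Z\setminus3\mathbb Z)$ — and indeed it is not there; it lies in $\frac{P_1}{3}(\mathbb Z\setminus3\mathbb Z)$. The same flawed step is invoked again for the claim that $\Lambda$ is an orthogonal set of $\mu_{\{p_n\},\{\mathcal D_n\}}$.

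The fix is to take $m$ to be the \emph{smallest} index with $\varepsilon_m\neq\varepsilon_m'$. Then
$\lambda-\lambda'=\frac{P_m}{3}\bigl((\varepsilon_m-\varepsilon_m')+\sum_{i>m}p_{m+1}\cdots p_i\,(\varepsilon_i-\varepsilon_i')\bigr)$,
and since every $p_j\in3\mathbb Z$, each cofactor $p_{m+1}\cdots p_i$ is an integer divisible by $3$, so $\frac{3}{P_m}(\lambda-\lambda')\equiv\varepsilon_m-\varepsilon_m'\not\equiv0\pmod 3$; combined with $3\nmid\gcd(a_m,b_m)$ (from $\{a_m,b_m\}=\{-1,1\}\pmod 3$), which you correctly use to get $\frac13(\mathbb Z\setminus3\mathbb Z)\subseteq\frac1{3\gcd(a_m,b_m)}(\mathbb Z\setminus3\mathbb Z)$, this puts $\lambda-\lambda'$ in $\mathcal Z\bigl(M_{\mathcal D_m}(P_m^{-1}\cdot)\bigr)\subseteq\mathcal Z(\hat\mu)$ by \eqref{q1}. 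The same correction repairs the argument for $\Lambda$ (comparing two points at a common level) and also gives the cleanest proof of $\#\Lambda_n=3^n$. Note the asymmetry that likely caused the slip: for $\Lambda_n$ the weights $\frac{P_i}{3}$ grow with $i$, so the mod-$3$ argument runs from the smallest differing index, whereas for the atoms of $\mu_n$ (where one checks $\dim L^2(\mu_n)=3^n$, i.e.\ that the $3^n$ points $\sum_i P_i^{-1}d_i$ are distinct) the weights $P_i^{-1}$ decay and it is the largest differing index that works. The remaining ingredients of your proposal are fine and match the paper's very brief proof.
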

\begin{proof}
It is easy to check that both $\Lambda_n$ and $\Lambda$ are orthogonal set by \eqref{q3} and \eqref{q4} for $\mu_n$ and $\mu_{\{p_n\},\{\mathcal{D}_n\}}$ respectively. Further, $\Lambda_n$ is a spectrum of $\mu_n$ because the dimension of $L^2(\mu_n)$ is $3^n$, which is the cardinality of the set $\Lambda_n$.
\end{proof}

The question of whether the infinite convolution $\mu_{\{p_n\},\{\mathcal{D}_n\}}$ defined by Definition \ref{de1.1} is a Moran measure has attracted widespread attention. Recently, Li et al. \cite{LMW22} investigated this issue and achieved some important results, one of which is the following.

\begin{thm}\cite[Theorem 3.26]{L20}\label{thm2.3}
With the hypothesis and notations in Definition \ref{de1.1}. Then $\mu_{\{R_n\},\{\mathcal{D}_n\}}$ is a Moran measure with compact support if and only if
\begin{align*}
\sum_{n=1}^{\infty}\max \{\|\mathcal{R}_n^{-1}d_n\|_{E}:d_n\in \mathcal D_n\}<\infty,
\end{align*}
where $\|\cdot \|_E$ denotes the Euclidean norm.
\end{thm}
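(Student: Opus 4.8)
The plan is to work with the probabilistic description of $\mathcal{U}_k$: let $X_1,X_2,\dots$ be independent random vectors with $X_n$ uniformly distributed on $\mathcal{R}_n^{-1}\mathcal{D}_n$, so that $\mathcal{U}_k$ is exactly the law of the partial sum $T_k:=\sum_{j=1}^{k}X_j$, its support is the Minkowski sum $K_k:=\sum_{j=1}^{k}\mathcal{R}_j^{-1}\mathcal{D}_j$, and, when the weak limit exists, $\mu_{\{R_n\},\{\mathcal{D}_n\}}$ is the law of $T:=\lim_k T_k$. Put $s_n:=\max\{\|\mathcal{R}_n^{-1}d\|_E:d\in\mathcal{D}_n\}$, the quantity in the statement, so that $\|X_n\|_E\le s_n$ surely.

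For the sufficiency I would argue directly. If $\sum_n s_n<\infty$, then $\sum_n\|X_n\|_E\le\sum_n s_n<\infty$ surely, so $T_k$ converges absolutely to some $T$ with $\|T\|_E\le\sum_n s_n$; since almost sure convergence implies convergence in distribution, $\mathcal{U}_k$ converges weakly to the law of $T$, whose support is contained in the closed ball of radius $\sum_n s_n$. Hence $\mu_{\{R_n\},\{\mathcal{D}_n\}}$ is a Moran measure with compact support. (Equivalently, the estimate $|1-\widehat{\delta_{\mathcal{R}_n^{-1}\mathcal{D}_n}}(\xi)|\le 2\pi s_n\|\xi\|_E$ shows that $\prod_n\widehat{\delta_{\mathcal{R}_n^{-1}\mathcal{D}_n}}$ converges locally uniformly to an entire function of exponential type $\le 2\pi\sum_n s_n$, and Paley--Wiener gives the compact support.)

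For the necessity, assume $\mathcal{U}_k$ converges weakly to $\mu$ with $\operatorname{supp}\mu$ contained in the ball $\{x:\|x\|_E\le\rho\}$. The first step is to promote this to a $k$-uniform bound on $K_k$. Because the $X_n$ are independent and $T_k$ converges in distribution, the It\^o--Nisio theorem gives that $T_k$ converges almost surely; hence the tail convolutions $\mu_{>k}$ — the laws of $\sum_{j>k}X_j$ — are well defined and converge weakly to $\delta_0$. (This last point can also be obtained directly: $\widehat{\mu_{>k}}=\widehat{\mu}/\widehat{\mathcal{U}_k}$ on a neighbourhood of the origin, this tends to $1$ as $k\to\infty$, and L\'evy's continuity theorem applies.) Now $\mu=\mathcal{U}_k\ast\mu_{>k}$, so $\operatorname{supp}\mu\supseteq K_k+c_k$ for any $c_k\in\operatorname{supp}\mu_{>k}$, and since $\mu_{>k}\to\delta_0$ we may take $\|c_k\|_E\to 0$; therefore $\sup_{x\in K_k}\|x\|_E\le\rho+o(1)$, and together with the trivial bound for small $k$ this gives $L:=\sup_{k}\sup_{x\in K_k}\|x\|_E<\infty$.

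The second step turns $L<\infty$ into $\sum_n s_n<\infty$ by an elementary pigeonhole-and-sign argument. For each $n$ choose $e_n\in\mathcal{D}_n$ with $\|\mathcal{R}_n^{-1}e_n\|_E=s_n$ and set $v_n=\mathcal{R}_n^{-1}e_n$, so $\sum_{n\le k}\|v_n\|_E=\sum_{n\le k}s_n$; a pigeonhole over the coordinate axes followed by a splitting of $\{1,\dots,k\}$ according to the sign of one coordinate of $v_n$ produces a subset $A$ with $\bigl\|\sum_{n\in A}v_n\bigr\|_E\ge c\sum_{n\le k}s_n$, where $c>0$ depends only on the ambient dimension. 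Since $0\in\mathcal{D}_n$, the vector $\sum_{n\in A}v_n$ (taking digit $e_n$ for $n\in A$ and digit $0$ otherwise) belongs to $K_k$, so $L\ge c\sum_{n\le k}s_n$ for every $k$, giving $\sum_n s_n<\infty$. (When $0\notin\mathcal{D}_n$ one runs the same argument with $\operatorname{diam}(\mathcal{R}_n^{-1}\mathcal{D}_n)$ in place of $s_n$, which is comparable to $s_n$ precisely when $0\in\mathcal{D}_n$.) I expect the genuine obstacle to be the first step of the necessity direction — converting the qualitative hypothesis that the weak limit exists and is compactly supported into the quantitative uniform bound $\sup_k\sup_{x\in K_k}\|x\|_E<\infty$; once that is available, the pigeonhole step and the sufficiency are both routine.
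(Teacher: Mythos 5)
The paper itself contains no proof of this statement; it is quoted from \cite[Theorem 3.26]{L20} (equivalently \cite[Corollary 1.2]{LMW22}), so there is nothing internal to compare against. Your argument is correct and essentially reconstructs the standard proof: sufficiency follows from the surely absolute convergence of the random series $\sum_n X_n$; necessity from (i) upgrading weak convergence of the independent partial sums to almost sure convergence (It\^o--Nisio/L\'evy), so that $\mu=\mathcal{U}_k*\mu_{>k}$ with $\mu_{>k}\Rightarrow\delta_0$ forces the uniform support bound $L:=\sup_k\sup_{x\in K_k}\|x\|_E<\infty$, and (ii) the pigeonhole-and-sign selection exhibiting an element of $K_k$ of norm at least $c\sum_{n\le k}s_n$. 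Both steps are sound, and the constant $c=\tfrac{1}{2}d^{-3/2}$ in dimension $d$ is easily checked. The one point you should make explicit rather than leave in a parenthesis is that step (ii) genuinely requires $0\in\mathcal{D}_n$: Definition \ref{de1.1} does not impose this, and without it the ``only if'' direction is false. For instance, in $\mathbb{R}$ take $R_n=2$ and $\mathcal{D}_n=\{(-1)^n2^n/n,\ (-1)^n2^n/n+2^n/n^2\}$; then $\mathcal{U}_k$ is the law of $\sum_{n\le k}\bigl((-1)^n/n+\eta_n/n^2\bigr)$ with $\eta_n\in\{0,1\}$ i.i.d., which converges weakly to a compactly supported measure (a conditionally convergent deterministic drift plus an absolutely convergent random part), while $\sum_n\max_{d\in\mathcal{D}_n}\|\mathcal{R}_n^{-1}d\|_E\geq\sum_n 1/n=\infty$. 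The hypothesis $0\in\mathcal{D}_n$ is a standing assumption in the cited sources and holds in every application in this paper ($\mathcal{D}_n=\{0,a_n,b_n\}$), so once you state it your proof is complete.
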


We proceed to show the proof of Theorem \ref{thm1.2}.

\begin{proof}[Proof of Theorem \ref{thm1.2}]
Let $\Lambda_n$ and $\Lambda$ be defined by \eqref{q5}.  We divide the proof into two steps.

{\bf Step 1}: For $\lambda\in\Lambda_n$ and $\xi\in(-\frac13,\frac13)$, our primary goal is to find a constant $c>0$  such that for all $n\geq N$,
\begin{align}\label{a2.8}
|\hat{\mu}_{>n}(\xi+\lambda)|=\prod_{i=n+1}^{\infty}\frac13|1+e^{-2\pi i\frac{a_i}{P_i}(\xi+\lambda)}+e^{-2\pi i\frac{b_i}{P_i}(\xi+\lambda)}|\geq c>0.
\end{align}

For any $n\geq N$, $\lambda=\sum_{j=1}^n\frac{P_j}3\ell_j \in \Lambda_n$ with $\ell_j\in\{0,\pm1\}$ and $\xi\in(-\frac13,\frac13)$, since $|p_{n}|\geq c_{1}n$, we obtain
\begin{align*}
|\frac{\xi+\lambda}{P_n}|\leq\frac13(1+\frac1{|p_n|}+\frac1{|p_{n-1}p_n|}+\cdots+\frac1{|p_1\cdots p_n|})<\frac13\sum_{j=0}^{\infty}\frac1{c_{1}^j}=\frac{c_{1}}{3(c_{1}-1)}.
\end{align*}
Then for any $n\geq N$ and $j\geq 1$, since $|a_{n}|\leq c_{2}n$ and $c_1\geq c_2+1$, we have
\begin{align}\label{b1}
|\frac{a_{n+j}}{P_{n+j}}(\xi+\lambda)|\leq|\frac{c_{2} (n+j)}{p_{n+j}\cdots p_{n+1}}\frac{(\xi+\lambda)}{P_n}|< \frac{c_{2}}{ 3c_{1}^{j-1}(c_{1}-1)}\leq \frac{1}{3}.
\end{align}
By \cite[Lemma 4.9]{LDZ22}, i.e., by the continuity of $f(x, y) = 1 + e^{-2\pi ix} + e^{-2\pi iy}$, for $(x, y) \notin \pm \left( \frac{1}{3}, \frac{2}{3} \right) + \mathbb{Z}^2$, there exists a constant $\varepsilon_0>0$ such that
\begin{align}\label{d11}
\frac13|1+e^{-2\pi i\frac{a_{n+j}}{P_{n+j}}(\xi+\lambda)}+e^{-2\pi i\frac{b_{n+j}}{P_{n+j}}(\xi+\lambda)}|\geq\varepsilon_0.
\end{align}

We claim that there exists a constant $M_k\geq1$(depends only on $k$) such that $(n+j)^k\leq M_k(n+1)^j$ for all $j\geq k+1,\;n\in \Bbb N^+$.
Let $f(n,j)=\frac{(n+j)^k}{(n+1)^j}$ be defined on $(n,j)\in \Bbb N^+\times \{k+1,k+2,\cdots\}:=I$. Note that for all $n\geq2$ and $j\geq k+1$,
\begin{align}\label{a1}
\frac{f(n,j+1)}{f(n,j)}=(1+\frac1{n+j})^k\frac1{n+1}\leq (1+\frac1{k+3})^k\frac1{3}<\frac{e}{3}<1,
\end{align}
where the penultimate inequality holds because $(1+\frac1n)^n$ is monotonically increasing with respect to $n$ and approaches $e$. \eqref{a1} implies
\begin{align*}
\max_{(n,j)\in I}f(n,j)\leq \max_{(n,j)\in I}\{f(1,j),f(n,k+1)\}.
\end{align*}
It follows from $\lim_{j\rightarrow\infty}f(1,j)=\lim_{n\rightarrow\infty}f(n,k+1)=0$ that both $f(1,j)$ and $f(n,k+1)$ are bounded. Therefore, the claim follows.

For any $\lambda=\sum_{j=1}^n\frac{P_j}3\ell_j$ with $\ell_j\in\{0,\pm1\}$, $\xi\in(-\frac13,\frac13)$ and $j\geq k+1$, we have
\begin{align}\label{c1}
|\frac{b_{n+j}}{P_{n+j}}(\xi+\lambda)|
&\leq|\frac{c_{3}(n+j)^k}{p_{n+j}\cdots p_{n+1}}\frac{(\xi+\lambda)}{P_n}| \nonumber \\
&< \frac{c_{1}}{3(c_{1}-1)}\frac{c_{3}(n+j)^k}{c_{1}^{j}\prod_{i=1}^j(n+i)}
< \frac{c_{3}(n+j)^k}{3c_{1}^{j-1}(c_{1}-1)(n+1)^j} \nonumber\\
&\leq\frac{c_3M_{k}}{3c_{1}^{j-1}(c_{1}-1)}.
\end{align}
Let $\gamma=\max\{[\frac{\log(\frac{4c_{1}c_3M_{k}}{3(c_{1}-1)} )}{\log c_{1}}]+1,k+1,[\frac{\log {c_2}}{\log N}]+2\}$, where $[x]$ denotes the integer part of $x$. Then for any $j\geq \gamma$ and $n\geq N$, combining with \eqref{b1}, \eqref{c1} and $ c_3M_{k} \geq 1$, we obtain
\begin{align*}
&\max\{|\frac{a_{n+j}}{P_{n+j}}(\xi+\lambda)|,|\frac{b_{n+j}}{P_{n+j}}(\xi+\lambda)|\} \\
\leq & \max\{\frac{c_{2}}{ 3c_{1}^{j-1}(c_{1}-1)\prod_{i=1}^{j-1}(n+i)}, \frac{c_3M_{k}}{3c_{1}^{j-1}(c_{1}-1)} \} \\
=&\frac{c_3M_{k}}{3c_{1}^{j-1}(c_{1}-1)}<\frac{1}{4}.
\end{align*}
This implies that
\begin{align}\label{e1}
\{1,e^{-2\pi i\frac{a_{n+j}}{P_{n+j}}(\xi+\lambda)},e^{-2\pi i\frac{b_{n+j}}{P_{n+j}}(\xi+\lambda)}\}\subset\{x+yi\in\Bbb C: x^2+y^2= 1, x\geq \cos{\frac{2c_3M_{k}\pi}{3c_{1}^{j-1}(c_{1}-1)}}\}.
\end{align}
Hence, for any $n\geq N$, $\lambda\in\Lambda_n$ and $\xi\in(-\frac13,\frac13)$, by \eqref{d11} and \eqref{e1}, we obtain
\begin{align*}
|\hat{\mu}_{>n}(\xi+\lambda)|&\geq \varepsilon_0^{\gamma-1}\prod_{j=\gamma}^{\infty}\frac13|1+e^{-2\pi i\frac{a_{n+j}}{P_{n+j}}(\xi+\lambda)}+e^{-2\pi i\frac{b_{n+j}}{P_{n+j}}(\xi+\lambda)}|\nonumber\\
&\geq \varepsilon_0^{\gamma-1}\prod_{j=\gamma}^{\infty}\cos{\frac{2c_3M_{k}\pi}{3c_{1}^{j-1}(c_{1}-1)}}:=c.
\end{align*}
{\bf Step 2}: We prove $\Lambda$ is a spectrum of $\mu_{\{p_n\},\{\mathcal D_n\}}$. Let $\alpha_n=N+n$. By Lemma \ref{lem2.2}, we obtain $\Lambda_{\alpha_n}$ is a spectrum of $\mu_{\alpha_n}$. It follows from \eqref{a2.8} that $$\inf_{\lambda\in\Lambda_{\alpha_{n+s}\setminus \Lambda_{\alpha_n}}}|\hat{\mu}_{>\alpha_{n+s}}(\xi+\lambda)|^2\geq c^2>0$$
for $n,s\geq 1$. Combining with \cite[Theorem 2.3 (i)]{AHH19}, we know $\mu_{\{p_n\},\{\mathcal D_n\}}$ is a spectral measure with a spectrum $\Lambda$ in \eqref{q5}.
\end{proof}

\section{Proof of Theorem \ref{thm1.3}}\label{section3}
The purpose of this section is to complete the proof of Theorem \ref{thm1.3}. Recall that $\mu_{\{p_n\},\{\mathcal D_n\}}=\mu_n*\mu_{>n}$. We define
\begin{align}\label{b}
\nu_{>n}=\delta_{p_{n+1}^{-1}\mathcal D_{n+1}}*\delta_{(p_{n+1} p_{n+2})^{-1}\mathcal D_{n+2}}*\cdots.
\end{align}
Then $\nu_{>n}(\cdot)=\mu_{>n}(\frac1{p_1p_2\cdots p_n}\cdot )$. Clearly, the spectrality of $\mu_{\{p_n\},\{\mathcal D_n\}}$ is affected by the tail term $\nu_{>n}$. To study the properties of $\nu_{>n}$, the equi-positivity condition is a very useful tool. An et al. \cite{AFL19} introduced equi-positivity of a family of probability measures supported on the common compact subset. Later, Li et al. \cite{LMW24} proposed a more generalized equal positivity condition (The support of a family of probability measures does not need to be contained in the common compact subset).

\begin{de}\cite[Definition 4.1]{LMW24}
We call $\Phi\subset \mathcal P(\Bbb R)$ an equi-positive family if there exist $\varepsilon>0$ and $\delta>0$ such that for $x\in[0,1)$ and $\mu\in\Phi$, there exists an integral vector $k_{x,\mu}\in \Bbb Z$ such that
\begin{align*}
|\hat{\mu}(x+y+k_{x,\mu})|\geq \varepsilon,
\end{align*}
for all $|y|<\delta$, where $k_{x,\mu}=0$ for $x=0$.
\end{de}

To prove Theorem \ref{thm1.3}, we first present an essential lemma proved by Li et al.

\begin{lemma}\cite[Theorem 4.2]{LMW24}\label{lem3.2}
Let $\{(p_n,\mathcal D_n,L_n):n\geq 1\}$ be a sequence of Hadamard triples in $\mathbb{R}$. Let the probability measure $\mu$ be the weak limit of $\mu_n$ defined by \eqref{w1}. If there exists a subsequence $\{n_j\}$ of positive integers such that the family $\{\nu_{>n_j}\}$ defined by \eqref{b} is an equi-positive family, then $\mu$ is a spectral measure with a spectrum in $\Bbb Z$.
\end{lemma}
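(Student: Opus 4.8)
The plan is to reduce completeness to the Jorgensen--Pedersen criterion (Lemma \ref{lem2.1}) and then feed the equi-positivity hypothesis into the resulting $Q$-function estimate. First I would normalize the Hadamard triples so that $0\in L_n\subset\mathbb Z$ for every $n$; this is always possible and forces the partial spectra $\Lambda_n:=L_1+p_1L_2+\cdots+P_{n-1}L_n$ to be nested subsets of $\mathbb Z$, each a spectrum of $\mu_n$ (the Hadamard property makes $\Lambda_n$ an orthonormal basis of $L^2(\mu_n)$, so $\sum_{\lambda\in\Lambda_n}|\hat\mu_n(\xi+\lambda)|^2\equiv 1$). Setting $\Lambda=\bigcup_j\Lambda_{n_j}\subset\mathbb Z$, the inclusions $\Lambda_{n_j}-\Lambda_{n_j}\subset\mathcal Z(\hat\mu_{n_j})\subset\mathcal Z(\hat\mu)$ show via \eqref{q4} that $E_\Lambda$ is orthonormal, whence $Q_\Lambda\le 1$ by Lemma \ref{lem2.1}$(i)$. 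By Lemma \ref{lem2.1}$(ii)$ it then suffices to prove $Q_\Lambda(\xi)\ge 1$ for every $\xi$, and since $\Lambda_{n_j}\nearrow\Lambda$ gives $Q_{\Lambda_{n_j}}(\xi)\nearrow Q_\Lambda(\xi)$, the whole problem becomes showing $\lim_{j\to\infty}Q_{\Lambda_{n_j}}(\xi)=1$.

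The central computation uses the factorization $\mu=\mu_{n_j}*\mu_{>n_j}$, $\hat\mu(\eta)=\hat\mu_{n_j}(\eta)\hat\mu_{>n_j}(\eta)$, together with Parseval for the complete spectrum $\Lambda_{n_j}$ of $\mu_{n_j}$, to write the defect as
\begin{equation*}
1-Q_{\Lambda_{n_j}}(\xi)=\sum_{\lambda\in\Lambda_{n_j}}|\hat\mu_{n_j}(\xi+\lambda)|^2\bigl(1-|\hat\mu_{>n_j}(\xi+\lambda)|^2\bigr).
\end{equation*}
Two structural facts drive the estimate. The first is the scaling identity $\hat\mu_{>n_j}(\eta)=\hat\nu_{>n_j}(\eta/P_{n_j})$ coming from \eqref{b}, which moves the tail factor to the scale on which equi-positivity is stated. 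The second is that $\hat\mu_{n_j}$ is $P_{n_j}$-periodic, since each factor $M_{\mathcal D_i}(P_i^{-1}\cdot)$ has period $P_i$ and $P_i\mid P_{n_j}$ for $i\le n_j$; hence the weights $|\hat\mu_{n_j}(\xi+\lambda)|^2$ are invariant under replacing $\lambda$ by $\lambda+kP_{n_j}$, $k\in\mathbb Z$. This periodicity is exactly what converts equi-positivity into a usable lower bound: for each $\lambda$, writing $(\xi+\lambda)/P_{n_j}=x_\lambda+y_\lambda$ with $x_\lambda\in[0,1)$ and $|y_\lambda|=|\xi|/P_{n_j}<\delta$ for $j$ large, the hypothesis furnishes $k_\lambda\in\mathbb Z$ with $|\hat\nu_{>n_j}(x_\lambda+y_\lambda+k_\lambda)|\ge\varepsilon$, i.e. $|\hat\mu_{>n_j}(\xi+\lambda+k_\lambda P_{n_j})|\ge\varepsilon$, while the weight is unchanged. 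The shifted set $\widetilde\Lambda_{n_j}=\{\lambda+k_\lambda P_{n_j}\}$ is again a complete spectrum of $\mu_{n_j}$ (same cardinality, orthogonality preserved by $P_{n_j}$-periodicity), so $Q_{\widetilde\Lambda_{n_j}}(\xi)\ge\varepsilon^2$ for all $\xi$ and all large $j$.

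The remaining and hardest step is to upgrade this uniform positivity $\varepsilon^2$ into the equality $Q_\Lambda\equiv1$, that is, to rule out loss of mass to infinity along the spectrum. My plan is a near/far decomposition of the defect sum: on the near range $|\lambda|\le R P_{n_j}$ one has $(\xi+\lambda)/P_{n_j}$ in a fixed compact set, where $\mu_{>n_j}\to\delta_0$ forces $\hat\mu_{>n_j}(\xi+\lambda)\to1$ uniformly, so these terms tend to $0$; the far range must be controlled by showing the weight $\sum_{|\lambda|>R P_{n_j}}|\hat\mu_{n_j}(\xi+\lambda)|^2$ is small uniformly in $j$ for $R$ large. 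To obtain this tightness I would pass, via Prokhorov, to a weakly convergent subsequence $\nu_{>n_j}\to\nu$, transfer equi-positivity (with the same $\varepsilon,\delta$) to the limit $\nu$, and argue that any persistent far-range mass together with the limiting bound $|\hat\nu|\ge\varepsilon$ would produce an orthonormal subfamily whose $Q$-value exceeds $1$, contradicting $Q_\Lambda\le1$. I expect this no-escape estimate to be the main obstacle, precisely because the generalized equi-positivity of \cite{LMW24} does not assume a common compact support for the $\nu_{>n_j}$; ensuring that the weak limit $\nu$ still captures the relevant mass and that the lower bound survives the limit is the delicate heart of the argument. Once the far mass is shown to vanish the defect tends to $0$, giving $Q_\Lambda\equiv1$, and Lemma \ref{lem2.1}$(ii)$ then yields that $\Lambda\subset\mathbb Z$ is a spectrum of $\mu$.
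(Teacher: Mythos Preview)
The paper does not prove Lemma~\ref{lem3.2}; it is quoted from \cite[Theorem~4.2]{LMW24} and used as a black box, so there is no proof in the present paper to compare against.

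On the substance of your sketch: the argument is sound up through the point where $P_{n_j}$-periodicity of $\hat\mu_{n_j}$ together with equi-positivity produce, for each large $j$ and each $|\xi|$ small, a shifted spectrum $\widetilde\Lambda_{n_j}$ of $\mu_{n_j}$ on which $|\hat\mu_{>n_j}(\xi+\tilde\lambda)|\ge\varepsilon$. The failure is in the step you yourself flag as hardest, and neither of your proposed remedies closes it. In the near range $|\lambda|\le R P_{n_j}$ you invoke $\mu_{>n_j}\to\delta_0$ to force $\hat\mu_{>n_j}(\xi+\lambda)\to 1$, but weak convergence of $\mu_{>n_j}$ to $\delta_0$ gives uniform convergence of $\hat\mu_{>n_j}$ only on \emph{fixed} compacta, whereas $\xi+\lambda$ ranges over an interval of length $\sim R|P_{n_j}|\to\infty$. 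What you actually need is $\hat\nu_{>n_j}\to 1$ uniformly on $[-R,R]$, i.e.\ $\nu_{>n_j}\to\delta_0$ weakly, and the introduction of this very paper emphasises that $\nu_{>n_j}$ need not converge weakly at all in the regime Lemma~\ref{lem3.2} is designed to cover. The same obstruction undermines your Prokhorov step for the far range: absent a common compact support or any tightness hypothesis on $\{\nu_{>n_j}\}$, no subsequence need converge weakly to a probability measure, so you cannot transfer the lower bound to a limiting $\nu$.

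The argument in \cite{LMW24} avoids weak limits of $\nu_{>n_j}$ altogether. Rather than fixing $\Lambda=\bigcup_j\Lambda_{n_j}$ in advance and then trying to show $Q_{\Lambda_{n_j}}\to 1$, one feeds the integer shifts $k_{x,\nu_{>n_j}}$ into the inductive \emph{construction} of $\Lambda$: at stage $j$ the new elements beyond the previous stage are translated by suitable multiples of $P_{n_j}$ so that they already carry the tail lower bound $\varepsilon$. Completeness then comes from a direct telescoping estimate on the defect $1-Q$, with no near/far decomposition and no compactness argument for $\{\nu_{>n_j}\}$ required.
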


\begin{proof}[Proof of Theorem \ref{thm1.3}]
We choose $L_n=\{0,\frac13,\frac23\}p_n$. Since  $p_n\in 3\Bbb Z\setminus\{0\}$ and $\{a_n,b_n\}=\{-1,1\}\pmod 3$, it is easy to check $(p_n,\mathcal D_n,L_n)$ is a Hadamard triple for all $n$. It follows from  $\{a_n',b_n'\}\subset\{0,1,\cdots, p_n-1\}$ and $\lim_{n\rightarrow\infty}\frac{\max\{\alpha_n,\beta_n\}}{|p_{n-1}|}=0$ that there exists a constant $c$ satisfying
\begin{align*}
\sum_{n=1}^{\infty}\frac{\max\{|a_n|,|b_n|\}}{|p_1p_2\cdots p_{n}|}\asymp \sum_{n=2}^{\infty}\frac{\max\{\alpha_n,\beta_n\}}{|p_1p_2\cdots p_{n-1}|}
\leq \sum_{n=1}^{\infty}\frac{c}{3^j}<\infty,
\end{align*}
where $A\asymp B$ means there exist constants $c_{1},c_{2}$ such that $c_{1}B\leq A \leq c_{2}B$.
This ensures there exists a probability measure $\mu$ such that $\mu_n$ converges weakly to $\mu$.

Without loss of generality, we assume $\liminf_{n\rightarrow\infty} |\frac{a_n}{p_n}|<\frac23$. Then there exists a strictly increasing sequence $\{n_j\}_{j=1}^{\infty}$ of positive integers such that $\lim_{j\rightarrow\infty} |\frac{a_{n_j}}{p_{n_j}}|=r<\frac23$. We will prove that $\{\nu_{>n_j-1}\}_{j=N}^{\infty}$ is an equi-positive family for some large integer $N$. Then Theorem \ref{thm1.3} follows from Lemma \ref{lem3.2}.

Let $\delta=\min\{\frac2{2+3r}-\frac12,\frac14\}>0$. In order to prove $\{\nu_{>n_j-1}\}_{j=N}^{\infty}$ is an equi-positive family, we only need to prove the following claim.

\textbf {Claim}: There exist two constant $C\in(0,1)$ and $N\in \Bbb N$ such that $|\hat \nu_{>n_k-1}(x)|>C$ for all $x\in[-\frac12-\delta,\frac12+\delta]$ and $k\geq N$.

In fact, If the claim holds, we put $k_x=0$ for $x\in[0,\frac12)$ and $k_x=-1$ for $x\in[\frac12,1)$. Hence $|\hat \nu_{>n_k-1}(x+y+k_x)|>C$ for all $x\in[0,1)$ and $|y|<\delta$. This implies $\{\nu_{>n_k-1}\}_{k=N}^{\infty}$ is an equi-positive family. Below is the proof of the claim.

It is easily seen that
\begin{align}\label{d1}
\hat \nu_{>n_k-1}(x)=M_{\mathcal D_{n_k}}(x_0)M_{\mathcal D_{n_k+1}}(x_1)\prod_{j=2}^{\infty}M_{\mathcal D_{n_k+j}}(x_j),
\end{align}
where $x_j=\frac{x}{p_{n_k}p_{n_k+1}\cdots p_{n_k+j}}$ for all $j\geq 0$.
For all $x\in[-\frac12-\delta,\frac12+\delta]$, we have
\begin{align}\label{aa}
|x_j|\leq \frac1{4\cdot 3^{j-1}|p_{n_k+j}|}.
\end{align}
Then we estimate the lower bounds of $|M_{\mathcal D_{n_k}}(x_0)|$, $|M_{\mathcal D_{n_k+1}}(x_1)|$ and $\prod_{j=2}^{\infty}|M_{\mathcal D_{n_k+j}}(x_j)|$, respectively.

Firstly, we estimate $|M_{\mathcal D_{n_k}}(x_0)|$.
It is evident that there exists a large integer $N_1$ such that $|\frac{a_{n_k}}{p_{n_k}}|<\frac13+\frac r2$ for all $k\geq N_1$. Then $|\frac{a_{n_k}}{p_{n_k}}x|<(\frac13+\frac r2)(\frac12+\delta)\leq\frac13$. By \cite[Lemma 4.9]{LDZ22}, there exists a constant $C_{r,\delta}\in(0,1)$ such that
\begin{align}\label{d2}
|M_{\mathcal D_{n_k}}(x_0)|=\frac13|1+e^{-2\pi i \frac{a_{n_k}}{p_{n_k}}x}+e^{-2\pi i \frac{b_{n_k}}{p_{n_k}}x}|\geq C_{r,\delta} \quad \text{for all }k\geq N_1.
\end{align}

Next, we estimate $|M_{\mathcal D_{n_k+1}}(x_1)|$. Note that
\begin{align}\label{bb1}
|M_{\mathcal D_{n_k+1}}(x_1)|&\geq\frac13|1+e^{-2\pi i a_{n_k+1}'x_1}+e^{-2\pi i b_{n_k+1}'x_1}|-\frac13|e^{-2\pi i a_{n_k+1}'x_1}-e^{-2\pi i a_{n_k+1}x_1}\nonumber\\
&+e^{-2\pi i b_{n_k+1}'x_1}-e^{-2\pi i b_{n_k+1}x_1}|:=I_{k,1}-I_{k,2}.
\end{align}
Clearly, $a_{n_k+1}'\in\{0,1,\cdots,p_{n_k+1}-1\}$. Combining this with \eqref{aa}, we have
\begin{align*}
|a_{n_k+1}'x_1|=|a_{n_k+1}'\frac{x}{p_{n_k}p_{n_k+1}}|<|\frac x{p_{n_k}}|\leq \frac14.
\end{align*}
Again by \cite[Lemma 4.9]{LDZ22}, there exists a constant $C_0\in(0,1)$ such that for all $k$,
\begin{align}\label{bb2}
I_{k,1}=\frac13|1+e^{-2\pi i a_{n_k+1}'x_1}+e^{-2\pi i b_{n_k+1}'x_1}|>C_0.
\end{align}
Observe that
\begin{align*}
I_{k,2}&=\frac13|e^{-2\pi i a_{n_k+1}'x_1}-e^{-2\pi i a_{n_k+1}x_1}+e^{-2\pi i b_{n_k+1}'x_1}-e^{-2\pi i b_{n_k+1}x_1}|\nonumber\\
&\leq\frac13|e^{-2\pi i a_{n_k+1}'x_1}-e^{-2\pi i a_{n_k+1}x_1}|+\frac13|e^{-2\pi i b_{n_k+1}'x_1}-e^{-2\pi i b_{n_k+1}x_1}|\nonumber\\
&=\frac23(|\sin((a_{n_k+1}-a'_{n_k+1})x_1\pi)|+|\sin((b_{n_k+1}-b'_{n_k+1})x_1\pi)|).
\end{align*}
Since $|\sin x|\leq |x|$, $x_1=\frac x{p_{n_k}p_{n_k+1}}$ and $|x|\leq \frac12+\delta\leq \frac34$, we have
\begin{align*}
I_{k,2}\leq \frac{\pi}2(\frac{\alpha_{n_k+1}}{|p_{n_k}|}+\frac{\beta_{n_k+1}}{|p_{n_k}|}).
\end{align*}
It follows from $\lim_{n\rightarrow\infty}\frac{\max\{\alpha_n,\beta_n\}}{|p_{n-1}|}=0$ that there exists a large integer $N_2$ such that for all $k\geq N_2$,

\begin{align}\label{bb3}
I_{k,2}<\frac{C_0}2.
\end{align}
Combining \eqref{bb1}, \eqref{bb2} and \eqref{bb3}, we have
\begin{align}\label{d3}
|M_{\mathcal D_{n_k+1}}(x_1)|\geq \frac{C_0}2\quad \text{for all }k\geq N_2.
\end{align}

Finally, we need accurate estimate for $\prod_{j=2}^{\infty}|M_{\mathcal D_{n_k+j}}(x_j)|$. Obviously,
\begin{align}\label{qw4}
|M_{\mathcal D_{n_k+j}}(x_j)|&\geq\frac13|1+e^{-2\pi i a_{n_k+j}'x_j}+e^{-2\pi i b_{n_k+j}'x_j}|-\frac13|e^{-2\pi i a_{n_k+j}'x_j}-e^{-2\pi i a_{n_k+j}x_j}\nonumber\\
&+e^{-2\pi i b_{n_k+j}'x_j}-e^{-2\pi i b_{n_k+j}x_j}|:=I_{k,j}'-I_{k,j}''.
\end{align}
From \eqref{aa}, it follows that for each $j\geq 2$ and $k\in \Bbb N$,
\begin{align*}
\max\{|2\pi a_{n_k+j}'x_j|,|2\pi b_{n_k+j}'x_j|\}\leq\frac{2\pi (|p_{n_k+j}|-1)}{4\cdot 3^{j-1}|p_{n_k+j}|}<\frac{\pi}{2\cdot 3^{j-1}}<\frac{\pi}2.
\end{align*}
This implies that
\begin{align*}
\{1,e^{-2\pi ia_{n_k+j}'x_j},e^{-2\pi i b_{n_k+j}'x_j}\}\subset\{x+yi\in\Bbb C: x^2+y^2= 1, x\geq \cos{\frac{\pi}{2\cdot 3^{j-1}}}\}.
\end{align*}
Then we have
\begin{align}\label{qw2}
I_{k,j}'=\frac13|1+e^{-2\pi i a_{n_k+j}'x_j}+e^{-2\pi i b_{n_k+j}'x_j}|\geq \cos{\frac{\pi}{2\cdot 3^{j-1}}}.
\end{align}
It is easy to see that
\begin{align*}
I_{k,j}''&=\frac13|e^{-2\pi i a_{n_k+j}'x_j}-e^{-2\pi i a_{n_k+j}x_j}+e^{-2\pi i b_{n_k+j}'x_j}-e^{-2\pi i b_{n_k+j}x_j}|\nonumber\\
&\leq\frac13|e^{-2\pi i a_{n_k+j}'x_j}-e^{-2\pi i a_{n_k+j}x_j}|+\frac13|e^{-2\pi i b_{n_k+j}'x_j}-e^{-2\pi i b_{n_k+j}x_j}|\nonumber\\
&=\frac23(|\sin{\frac{\alpha_{n_k+j} x\pi}{p_{n_k}p_{n_k+1}\cdots p_{n_k+j-1}}}|+|\sin{\frac{\beta_{n_k+j} x\pi}{p_{n_k}p_{n_k+1}\cdots p_{n_k+j-1}}}|).
\end{align*}
By using $|\sin x|\leq x$ and $|x|\leq \frac12+\delta\leq \frac34$, we obtain
\begin{align*}
I_{k,j}''\leq\frac{\pi}2(|\frac{\alpha_{n_k+j}}{p_{n_k}p_{n_k+1}\cdots p_{n_k+j-1}}|+|\frac{\beta_{n_k+j}}{p_{n_k}p_{n_k+1}\cdots p_{n_k+j-1}}|).
\end{align*}
Since $\lim_{n\rightarrow\infty}\frac{\max\{\alpha_n,\beta_n\}}{|p_{n-1}|}=0$, there exists a large integer $N_3$ such that $\frac{\max\{\alpha_{n_k+j},\beta_{n_k+j}\}}{|p_{n_k+j-1}|}<\frac1{3\pi}$ for all $k\geq N_3$ and $j\geq 2$. Then we have
\begin{align}\label{qw3}
I_{k,j}''\leq\frac1{3^{j-1}}.
\end{align}
Combining \eqref{qw4}, \eqref{qw2} and \eqref{qw3},
\begin{align*}
|M_{\mathcal D_{n_k+j}}(x_j)|\geq \cos{\frac{\pi}{2\cdot 3^{j-1}}}-\frac{1}{3^{j-1}}>0.
\end{align*}
Note that
\begin{align*}
\cos{\frac{\pi}{2\cdot 3^{j-1}}}-\frac{1}{3^{j-1}}-1\leq 0 \quad \text{and}\quad \lim_{j\rightarrow\infty}\cos{\frac{\pi}{2\cdot 3^{j-1}}}-\frac{1}{3^{j-1}}-1=0.
\end{align*}
By using $\log (x+1)\geq \frac32 x\;\text{for }x\in(-\frac12,0]$ and $\cos x\geq 1-\frac12 x^2$, we obtain that for large integer $j$,
\begin{align*}
\log(\cos{\frac{\pi}{2\cdot 3^{j-1}}}-\frac{1}{3^{j-1}})>\frac32(\cos{\frac{\pi}{2\cdot 3^{j-1}}}-\frac{1}{3^{j-1}}-1)\geq-\frac32(\frac{\pi^2}{8\cdot 9^{j-1}}+\frac1{3^{j-1}}).
\end{align*}
Then there exists a constant $C_1>0$ such that
\begin{align*}
\sum_{j=2}^{\infty}\log(\cos{\frac{\pi}{2\cdot 3^{j-1}}}-\frac{1}{3^{j-1}})>-C_1.
\end{align*}
Hence, we get
\begin{align}\label{d4}
\prod_{j=2}^{\infty}|M_{\mathcal D_{n_k+j}}(x_j)|&=\exp\{\sum_{j=2}^{\infty}\log|M_{\mathcal D_{n_k+j}}(x_j)|\}\geq\exp\{\sum_{j=2}^{\infty}\log(\cos{\frac{\pi}{2\cdot 3^{j-1}}}-\frac{1}{3^{j-1}})\}\nonumber\\
&>e^{-C_1}\quad \text{for all } k\geq N_3.
\end{align}

We choose
\begin{align*}
N=\max_{1\leq i\leq 3}\{N_i\}\quad \text{and} \quad C=\frac{C_0C_{r,\delta}e^{-C_1}}2\in(0,1).
\end{align*}
Then for all $x\in[-\frac12-\delta,\frac12+\delta]$ and $k\geq N$, combining \eqref{d1}, \eqref{d2}, \eqref{d3} and \eqref{d4}, we have
 \begin{align*}
|\hat \nu_{>n_k-1}(x)|>C.
\end{align*}
Therefore, the claim holds.
\end{proof}

\section {Non-spectrality of $\mu_{\{p_n\},\{\mathcal{D}_n\}}$}\label{section4}

In this section, we aim to prove Theorem \ref{thm1.5}. This proof is both challenging and meticulous, forming the core of our paper. To do this, we will introduce the maximal mapping, which is a typical way of constructing the maximal orthogonal set. Write
\begin{align*}
A=\{0,1,2\},\quad  A^0=\{\emptyset\}\quad\mbox{and}\quad A^n=\{\textbf i=i_1i_2\cdots i_n:\; \mbox{all} \; i_j\in A\}.
\end{align*}
Let $A^*=\cup_{n=1}^{\infty}A^n$ be the set of all finite words and $A^{\infty}=\{\textbf{i}=i_1i_2\cdots:\;\mbox{all} \; i_j\in A\}$ be the set of all infinite words. For any $\textbf{i}\in A^*$, $\textbf{j}\in A^*\cup A^{\infty}$, \textbf{ij} is their natural conjunction. In particular, $\emptyset\textbf i=\textbf i$, $\textbf i0^{\infty}=\textbf i00\cdots$ and $0^k=0\cdots 0\in A^k$.

\begin{de}
Suppose that $3|p_n$ for $n\geq2$. A mapping $\iota: A^*\rightarrow \Bbb Z $ is called a maximal mapping if

(i) $\iota(\emptyset)=\iota(0^n)=0$ for all $n\geq 1$;

(ii) for any $\mathbf{i}=i_1i_2\cdots i_k\in A^k$, $\iota(\mathbf{i})\in (i_k+3\Bbb Z)\cap \{-1,0,1,\cdots, |p_{k+1}|-2\}$;

$(iii)$ for any $\mathbf{i}\in A^*$, there exists $\mathbf{j}\in A^*$ such that $\iota$ vanishes eventually on $\mathbf{ij}0^{\infty}$, i.e., $\iota(\mathbf{ij}0^k)=0$ for sufficient large $k$.
\end{de}

Let
\begin{align*}
A^{\iota}=\{\textbf{i}=i_1\cdots i_k\in A^*:i_k\neq 0, \; \iota(\textbf{i}0^n)=0\; \mbox{for sufficient large}\;n\}\cup \{\emptyset\}
\end{align*}
and define
\begin{align*}
\iota^*(\textbf{i})=\sum_{k=1}^{\infty}\iota(\textbf{i}0^\infty|_k)|P_k|,\quad \textbf i \in A^{\iota},
\end{align*}
where $\textbf{i}0^{\infty}|_n$ denotes the word of the first $n$ entries.

\begin{thm}\cite[Theorem 3.4]{AHH19}\label{thm4.2}
Let $\mu_{\{p_n\},\{\mathcal{D}_n\}}$ be defined by \eqref{(1.1)}. Suppose that $\gcd(a_n,b_n)=1$, $\{a_n,b_n\}\equiv\{1,2\}\pmod3$ and $a_n<b_n$ for $n\geq1$. If $3|p_n$ for $n\geq2$, then $\Lambda$ is a maximal orthogonal set of $\mu_{\{p_n\},\{\mathcal D_n\}}$ if and only if there exists a maximal mapping $\iota$ such that $\Lambda=\frac13\iota^*(A^{\iota})$.
\end{thm}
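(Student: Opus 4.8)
The plan is to reduce the statement to an arithmetic fact about a canonical digit expansion and then read off both implications from a single ``matching lemma''. Since orthogonality is translation invariant I may assume $0\in\Lambda$, so by \eqref{q3} and \eqref{q4} together with $\gcd(a_n,b_n)=1$ every $\lambda\in\Lambda\setminus\{0\}$ lies in $\mathcal Z(\hat\mu_{\{p_n\},\{\mathcal D_n\}})=\bigcup_{n\geq1}\frac{P_n}{3}(\mathbb Z\setminus3\mathbb Z)$. The first step is a \emph{canonical digit expansion lemma}: because $|p_n|\geq3$ for $n\geq2$, every $\lambda$ in $\mathcal Z(\hat\mu)\cup\{0\}$ admits a unique representation $\lambda=\frac13\sum_{k\geq1}c_k|P_k|$ with $c_k\in\{-1,0,1,\ldots,|p_{k+1}|-2\}$ and only finitely many $c_k\neq0$. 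One proves this by peeling off one digit at a time modulo $|p_{k+1}|$ and carrying the quotient to the next level; the finiteness comes from the fact that $3\lambda$ is a fixed integer, divisible by $|P_m|$ for the relevant $m$, while $|P_k|\to\infty$. To $\lambda$ I attach its \emph{letter word} $w(\lambda)=i_1i_2\cdots$, $i_k:=c_k\bmod3\in\{0,1,2\}$, which is eventually $0$.

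The key computation is then the following. Writing $S_{\ell-1}=\sum_{k<\ell}c_k|P_k|$, a short estimate using $|p_\ell|\geq3$ gives $|S_{\ell-1}|<|P_\ell|$, hence $S_{\ell-1}=0$ if and only if $c_1=\cdots=c_{\ell-1}=0$; combined with $3\mid p_j$ for $j\geq2$ this yields the clean criterion: $\lambda\in\mathcal Z(\hat\mu)$ if and only if the first nonzero digit $c_{\ell_0}$ of $\lambda$ satisfies $3\nmid c_{\ell_0}$. Consequently, for $\lambda\neq\lambda'$ in $\mathcal Z(\hat\mu)\cup\{0\}$ with digit expansions $(c_k),(c_k')$ and $k_0:=\min\{k:c_k\neq c_k'\}$, one has $\lambda-\lambda'\in\mathcal Z(\hat\mu)$ \emph{if and only if} $c_{k_0}\not\equiv c_{k_0}'\pmod3$ (the first nonzero digit of $\lambda-\lambda'$ occurs at level $k_0$ and is congruent to $c_{k_0}-c_{k_0}'$ mod $3$ after one carry), i.e. if and only if the letter words $w(\lambda),w(\lambda')$ genuinely differ at position $k_0$ rather than merely through the carries. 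In particular, any two elements of an orthogonal set $\Lambda\ni0$ sharing a letter-word prefix of length $k$ must share the digits $c_1,\ldots,c_k$; so inside such a $\Lambda$ the whole digit expansion of an element is determined by its letter word.

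With this in hand the two directions run in parallel. For $(\Leftarrow)$, given a maximal mapping $\iota$ put $\Lambda=\frac13\iota^*(A^\iota)$; by (i) and (ii) the element $\frac13\iota^*(\mathbf i)$ has digits $\iota(\mathbf i0^\infty|_k)$ and letter word $\mathbf i0^\infty$. Distinct $\mathbf i,\mathbf i'\in A^\iota$ have letter words first differing at some $k_0$ where, by (ii), $\iota(\mathbf i0^\infty|_{k_0})\not\equiv\iota(\mathbf i'0^\infty|_{k_0})\pmod3$, so the criterion gives $\frac13\iota^*(\mathbf i)-\frac13\iota^*(\mathbf i')\in\mathcal Z(\hat\mu)$ and $\Lambda$ is orthogonal. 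For maximality, if $\lambda^*\notin\Lambda$ but $\Lambda\cup\{\lambda^*\}$ is orthogonal, let $w^*=e_1e_2\cdots$ be the letter word of $\lambda^*=\lambda^*-0\in\mathcal Z(\hat\mu)$; for each $k$, condition (iii) applied to $e_1\cdots e_k$ produces $\mathbf i_k\in A^\iota$ whose letter word extends $e_1\cdots e_k$, and orthogonality of $\lambda^*$ against $\frac13\iota^*(\mathbf i_k)\in\Lambda$ forces, via the criterion (the letters agreeing up to level $k$), the digits of $\lambda^*$ to equal $\iota(e_1\cdots e_j)$ for all $j\leq k$; letting $k\to\infty$ and using that these digits vanish eventually, the trailing-zero truncation $\bar w^*$ lies in $A^\iota$ and $\lambda^*=\frac13\iota^*(\bar w^*)\in\Lambda$, a contradiction. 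For $(\Rightarrow)$, given a maximal orthogonal $\Lambda\ni0$, define, for $\mathbf i\in A^*$ of length $k$, $\iota(\mathbf i):=c_k(\lambda)$ for any $\lambda\in\Lambda$ whose letter word begins with $\mathbf i$ (well defined by the previous paragraph); maximality forces such a $\lambda$ to exist for \emph{every} $\mathbf i$. Then (i) and (ii) are immediate from the shape of the canonical expansion, (iii) holds because the witnessing $\lambda$ has only finitely many nonzero digits, and $\Lambda=\frac13\iota^*(A^\iota)$ holds since $\lambda=\frac13\iota^*(\bar w(\lambda))$ for each $\lambda\in\Lambda$ while conversely each $\frac13\iota^*(\mathbf i)$ must lie in $\Lambda$, again by maximality and the criterion.

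The step I expect to be the main obstacle is the ``maximality $\Rightarrow$ totality'' claim inside $(\Rightarrow)$: if some word $\mathbf i_0$ were a prefix of no letter word occurring in $\Lambda$, one must actually \emph{construct} a new point $\lambda^*$ orthogonal to all of $\Lambda$ whose expansion can be closed off to be finite — choose digits agreeing with the part of $\iota$ already pinned down along a branch through $\mathbf i_0$, then terminate with admissible carry-digits, and verify $\lambda^*-\lambda\in\mathcal Z(\hat\mu)$ for every $\lambda\in\Lambda$ simultaneously via the criterion. This is exactly the content that reappears as condition (iii), and making the limiting/carrying argument rigorous is the delicate part; by comparison the digit-expansion lemma and the estimate $|S_{\ell-1}|<|P_\ell|$ are routine, though one has to be slightly careful because $p_1$ itself need not be divisible by $3$.
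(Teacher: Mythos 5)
Your argument is sound, and it is essentially the argument of the cited source: the paper itself states this result as \cite[Theorem 3.4]{AHH19} without proof, and the original proof there likewise runs through the unique digit expansion $3\lambda=\sum_k c_k|P_k|$ with $c_k\in\{-1,0,\dots,|p_{k+1}|-2\}$, the criterion that membership in $\mathcal Z(\hat\mu)$ is decided by the first nonzero digit modulo $3$, and the maximality argument that every word in $A^*$ must be realized as a letter-word prefix (otherwise one appends an admissible digit and terminates to produce a new orthogonal point). The only caveat is the standard normalization $0\in\Lambda$, which you correctly invoke at the outset and which is implicit in the statement since $\frac13\iota^*(A^{\iota})$ always contains $0$.
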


Let $\mu_{n}$ be given in \eqref{w1}. An et al. \cite{AHH19} established the following result by using the maximal mapping and Theorem \ref{lem2.1}.

\begin{lemma}\cite[Lemma 3.5]{AHH19}\label{lem4.3}
With the hypothesis of Theorem \ref{thm4.2}.  Then $A^{\iota}\cap A^n$ is an orthogonal set of $\mu_{n}$. Consequently,
\begin{align*}
\sum_{\mathbf{i}\in A^{\iota}\cap A^n}|\hat\mu_n(\xi+\frac 1 3 \iota^*(\mathbf{i}))|^2\leq 1,\quad \forall\;\xi\in\mathbb{R}.
\end{align*}
\end{lemma}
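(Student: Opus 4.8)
The plan is to exhibit a finite orthogonal set of $\mu_n$ that is indexed bijectively by $A^{\iota}\cap A^n$ and then quote the Jorgensen--Pedersen criterion. Set $\Gamma_n:=\frac13\iota^*(A^{\iota}\cap A^n)$. Since $\hat\mu_n=\prod_{l=1}^n M_{\mathcal D_l}(P_l^{-1}\cdot)$ by \eqref{w1}, arguing as for \eqref{q3} and using $\gcd(a_l,b_l)=1$ gives $\mathcal Z(\hat\mu_n)=\bigcup_{l=1}^{n}\frac{P_l}{3}(\Bbb Z\setminus 3\Bbb Z)$. By the orthogonality criterion \eqref{q4}, it therefore suffices to prove two things: (a) $\iota^*$ is injective on $A^{\iota}\cap A^n$; and (b) for distinct $\mathbf i,\mathbf j\in A^{\iota}\cap A^n$ one has $\frac13\bigl(\iota^*(\mathbf i)-\iota^*(\mathbf j)\bigr)\in\bigcup_{l=1}^{n}\frac{P_l}{3}(\Bbb Z\setminus 3\Bbb Z)$. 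Both come out of a single residue computation, so the only real work is there.

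Fix distinct $\mathbf i,\mathbf j\in A^{\iota}\cap A^n$ and let $m$ be the first position at which the infinite words $\mathbf i0^{\infty}$ and $\mathbf j0^{\infty}$ differ. As $\mathbf i$ and $\mathbf j$ have length at most $n$, both infinite words are identically $0$ from position $n+1$ on, whence $m\le n$; this bound is the crux. Since $\mathbf i,\mathbf j\in A^{\iota}$ the sums $\iota^*(\mathbf i),\iota^*(\mathbf j)$ are finite, and since the two words agree before position $m$ we get $\iota^*(\mathbf i)-\iota^*(\mathbf j)=\sum_{k\ge m}\bigl(\iota(\mathbf i0^{\infty}|_k)-\iota(\mathbf j0^{\infty}|_k)\bigr)|P_k|$; factoring out $|P_m|$ writes this as $|P_m|\,t$ with $t\in\Bbb Z$. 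For every $k>m$ the quotient $|P_k|/|P_m|=|p_{m+1}\cdots p_k|$ is divisible by $3$ because $3\mid p_l$ for all $l\ge2$, so $t\equiv \iota(\mathbf i0^{\infty}|_m)-\iota(\mathbf j0^{\infty}|_m)\pmod 3$. By property (ii) in the definition of a maximal mapping (and property (i) when the pertinent prefix is a string of zeros), $\iota(\mathbf i0^{\infty}|_m)$ and $\iota(\mathbf j0^{\infty}|_m)$ are congruent mod $3$ to the $m$-th letters of $\mathbf i0^{\infty}$ and $\mathbf j0^{\infty}$, which are two distinct elements of $\{0,1,2\}$; hence $t\not\equiv 0\pmod 3$. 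Consequently $\iota^*(\mathbf i)-\iota^*(\mathbf j)\in P_m(\Bbb Z\setminus 3\Bbb Z)$: in particular it is nonzero, which gives (a), and dividing by $3$ gives $\frac13\bigl(\iota^*(\mathbf i)-\iota^*(\mathbf j)\bigr)\in\frac{P_m}{3}(\Bbb Z\setminus 3\Bbb Z)$ with $m\le n$, which gives (b).

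With (a) and (b) in hand, \eqref{q4} shows that $E_{\Gamma_n}$ is an orthonormal family in $L^2(\mu_n)$, so Lemma \ref{lem2.1}(i) yields $\sum_{\lambda\in\Gamma_n}|\hat\mu_n(\xi+\lambda)|^2\le 1$ for all $\xi\in\Bbb R$. Because $\mathbf i\mapsto\frac13\iota^*(\mathbf i)$ maps $A^{\iota}\cap A^n$ bijectively onto $\Gamma_n$ by (a), the left-hand side equals $\sum_{\mathbf i\in A^{\iota}\cap A^n}|\hat\mu_n(\xi+\frac13\iota^*(\mathbf i))|^2$, which is the asserted inequality.

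The delicate point, and the step I would double-check most carefully, is the inequality $m\le n$ and the way it is used. A word of length $\le n$ lying in $A^{\iota}$ may well have $\iota$ nonzero at positions beyond $n$ — property (ii) only pins down residues mod $3$, so for instance $\iota(\mathbf i0)$ can equal $3$ — so a single $\iota^*(\mathbf i)$ need not sit in the lattice generated by $P_1/3,\dots,P_n/3$, and one cannot argue componentwise. What rescues the argument is that these high-index contributions are identical for any two words of length $\le n$ and therefore cancel in the difference $\iota^*(\mathbf i)-\iota^*(\mathbf j)$, leaving only indices $\le n$; combined with $3\mid p_l$ for $l\ge2$, this is precisely what forces the difference into the first $n$ pieces of $\mathcal Z(\hat\mu_n)$. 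Everything else is the direct verification sketched above, essentially as in \cite{AHH19}.
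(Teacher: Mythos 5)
The paper only cites this lemma from \cite{AHH19} and supplies no proof of its own, so there is no internal argument to compare against; your proof is the standard one (show the difference set lands in $\mathcal Z(\hat\mu_n)$, then invoke Lemma \ref{lem2.1}(i)), and the computation in your second paragraph is correct and complete. One assertion in your closing paragraph is false, however: for $k>n$ the values $\iota(\mathbf i0^{\infty}|_k)=\iota(\mathbf i0^{k-n})$ and $\iota(\mathbf j0^{k-n})$ need \emph{not} coincide for distinct $\mathbf i,\mathbf j\in A^{\iota}\cap A^n$. Property (ii) only constrains each to lie in $3\Bbb Z\cap\{-1,0,\dots,|p_{k+1}|-2\}$, and the value depends on the entire word, so for instance $\iota(\mathbf i0)=3$ while $\iota(\mathbf j0)=0$ is perfectly admissible; the high-index contributions therefore do not ``cancel'' and the difference is \emph{not} supported only on indices $\le n$. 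What actually rescues the argument is exactly what you wrote earlier: every term with $k>m$ carries the factor $|p_{m+1}\cdots p_k|\equiv 0\pmod 3$ (since $3\mid p_l$ for $l\ge 2$ and $m\ge 1$), so only the $k=m$ term survives in $t\bmod 3$, regardless of whether the tails agree. Delete or correct that closing remark; the proof itself stands as written in the body.
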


We give a useful lemma, which is used to prove Theorem \ref{thm1.5}.

\begin{lemma}\label{lem4.4}
Let $\alpha$ and $\beta$ be two co-prime positive integers such that $\alpha>\beta>1$. Then there exists a sequence $\{q_n\}_{n=1}^\infty\subset\mathbb{N}$ such that
\begin{align*}
\Delta(q_n+1,x):=\#\left\{j:0\leq j\leq q_n+1 \;\mathrm{and} \;||(\frac\alpha \beta)^{j}x||\geq\frac1{\alpha\beta}\right\}\geq n
\end{align*}
for all $x\in[1,\frac\alpha\beta]$, where $\|x\|=dist(x,\Bbb Z)$. In particular, $q_1$ is the smallest positive integer such that $[\frac\alpha\beta]+1\leq \beta^{q_1}$.
\end{lemma}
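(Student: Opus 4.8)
\emph{Setup and the key estimate.} Fix $x\in[1,\alpha/\beta]$ and write $x_j:=(\alpha/\beta)^jx$; call an index $j\ge 0$ \emph{bad} if $\|x_j\|<\frac1{\alpha\beta}$ and \emph{good} otherwise, so that $\Delta(q_n+1,x)$ is exactly the number of good indices in $\{0,1,\dots,q_n+1\}$. The heart of the proof is the following bound on a run of consecutive bad indices: if $j,j+1,\dots,j+\ell$ are all bad, then $\beta^{\ell}\le(\alpha/\beta)^{j+1}+\frac12$. To see this, for a bad index $i$ write $x_i=m_i+\varepsilon_i$ with $m_i\in\mathbb Z$ the nearest integer and $|\varepsilon_i|<\frac1{\alpha\beta}$; then $x_{i+1}=\frac{\alpha}{\beta}m_i+\frac{\alpha}{\beta}\varepsilon_i$ with $|\frac{\alpha}{\beta}\varepsilon_i|<\frac1{\beta^2}\le\frac14$. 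If $\beta\nmid m_i$ then, using $\gcd(\alpha,\beta)=1$, the rational $\frac{\alpha m_i}{\beta}$ lies at distance $\ge\frac1\beta$ from $\mathbb Z$, whence $\|x_{i+1}\|\ge\frac1\beta-\frac1{\beta^2}>\frac1{\alpha\beta}$ (the last inequality is $(\alpha-1)(\beta-1)>1$, which holds since $\alpha\ge 3$, $\beta\ge 2$), i.e. $i+1$ is good; if instead $\beta\mid m_i$ then $m_{i+1}=\frac{\alpha}{\beta}m_i$ and $\varepsilon_{i+1}=\frac{\alpha}{\beta}\varepsilon_i$. Running this dichotomy along $i=j,j+1,\dots,j+\ell-1$, and using $\gcd(\alpha,\beta)=1$ at each step to upgrade $\beta\mid m_{j+i}=\alpha^{i}m_j/\beta^{i}$ into $\beta^{i+1}\mid m_j$, yields $\beta^{\ell}\mid m_j$; since $x\ge 1$ forces $m_j\ge 1$, we conclude $\beta^{\ell}\le m_j\le x_j+\frac12\le(\alpha/\beta)^{j+1}+\frac12$.

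\emph{From the chain bound to a gap bound.} Applying the estimate to a bad run starting at an index $g+1$ shows that every good index $g$ is followed by a good index that is at most $F(g):=g+2+\big\lfloor\log_\beta\big((\alpha/\beta)^{g+2}+\frac12\big)\big\rfloor$; note $F$ is integer-valued, strictly increasing, and $F(g)>g$. Moreover, applying the estimate at $j=0$ to a bad run $0,1,\dots,\ell$ gives $\beta^{\ell}\le m_0\le[\alpha/\beta]+1\le\beta^{q_1}$, where the defining inequality of $q_1$ is used; hence $\ell\le q_1$ and the smallest good index for $x$ is at most $q_1+1$.

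\emph{Uniformity in $x$ and the choice of $q_n$.} Let $g_0<g_1<\cdots$ enumerate the good indices for a given $x\in[1,\alpha/\beta]$. Put $G_0:=q_1+1$ and $G_k:=F(G_{k-1})$; these integers are independent of $x$. By the previous paragraph $g_0\le G_0$, and since $F$ is increasing, $g_k\le F(g_{k-1})\le F(G_{k-1})=G_k$ for all $k$. Setting $q_n:=G_{n-1}-1$ produces a strictly increasing sequence of positive integers with $q_1=G_0-1$ equal to the integer named in the statement, and the good indices $g_0,\dots,g_{n-1}$ all lie in $\{0,1,\dots,G_{n-1}\}=\{0,1,\dots,q_n+1\}$; therefore $\Delta(q_n+1,x)\ge n$ for every $x\in[1,\alpha/\beta]$, which is the assertion.

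\emph{Main obstacle.} The one delicate point is the chain bound: one must recognize that a long block of consecutive indices at which $\|(\alpha/\beta)^jx\|$ is tiny forces the nearest integer $m_j$ to be divisible by a high power of $\beta$, which collides with the size constraint $m_j\le(\alpha/\beta)^{j+1}+\frac12$. Making the constants work — in particular keeping ``distance $\ge\frac1\beta$ minus an error $<\frac1{\beta^2}$'' strictly above $\frac1{\alpha\beta}$, which is exactly where the hypothesis $\alpha>\beta>1$ (via $(\alpha-1)(\beta-1)>1$) is used — is the only subtle computation; the passage to a single uniform sequence $q_n$ through the recursion $G_k=F(G_{k-1})$ is then a routine bootstrap.
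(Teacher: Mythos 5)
Your proof is correct and follows essentially the same route as the paper: the dichotomy on whether $\beta$ divides the nearest integer (coprimality pushing the next iterate at least $\frac1\beta-\frac1{\beta^2}>\frac1{\alpha\beta}$ away from $\mathbb Z$, versus $\beta$-divisibility accumulating along a bad run until it collides with the size bound $m_j\le(\alpha/\beta)^{j+1}+\frac12$) is exactly the paper's Case I/Case II analysis with the sets $B(i)$ and $C$. The only difference is bookkeeping: you package the mechanism as a uniform gap function $F$ between consecutive good indices and obtain $q_n$ by iterating $F$, whereas the paper selects the $q_n$ window by window; both yield the lemma, including the stated value of $q_1$.
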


\begin{proof}
For $n\geq1$ and integers $\alpha>\beta>1$, write
\begin{align*}
B(n)=\left(n-\frac{1}{\alpha\beta},n+\frac{1}{\alpha\beta}\right)\quad \mbox{and} \quad C=\bigcup_{n=1}^\infty \left[n+\frac 1 {\alpha\beta}, n+1-\frac 1 {\alpha\beta}\right].
\end{align*}
Let $q_1$ be the smallest positive integer such that $[\frac\alpha\beta]+1\leq \beta^{q_1}$.
For all $x\in[1,\frac\alpha\beta]$, we firstly prove
\begin{align}\label{q6}
\Delta(q_1+1,x):=\#\left\{j:0\leq j\leq q_1+1 \;\mathrm{and} \;||(\frac\alpha \beta)^{j}x||\geq\frac1{\alpha\beta}\right\}\geq 1.
\end{align}
It is easy to check that $x\in[1,\frac\alpha\beta]\subset C \cup (\cup_{i=1}^{[\frac{\alpha}{\beta}]+1} B(i))$. Obviously, \eqref{q6} holds for $x\in C$.
If $x\in\cup_{i=1}^{[\frac{\alpha}{\beta}]+1} B(i)$, then there exists $i_0\in\{1,2,\cdots ,[\frac{\alpha}{\beta}]+1\}$ such that $x\in B(i_0)$, i.e.,
$\frac\alpha\beta x\in(\frac\alpha\beta i_0-\frac1{\beta^2},\frac\alpha\beta i_0+\frac1{\beta^2})$. We can prove the \eqref{q6} in two cases.

{\bf Case I}: $\beta \nmid i_0$. Since $\gcd(\alpha,\beta)=1$, we put $\alpha i_0=k \beta+l$, where $k\in\Bbb N^+$ and $l\in\{1,2,\cdots, \beta-1\}$. Then
\begin{align*}
\frac\alpha\beta x\in(k+\frac{l}\beta -\frac1{\beta^2},k+\frac {l}\beta +\frac1{\beta^2}).
\end{align*}
By a simple calculation, we get $\frac\alpha\beta x\in[k+\frac1{\alpha\beta},k+1-\frac1{\alpha\beta}] \subset C$.

{\bf Case II}: $\beta | i_0$. Write $i_0=\beta^s \gamma$, where $1\leq s\leq q_1$ and $\beta\nmid\gamma $. If $(\frac\alpha\beta)^{j}x\in C$ for some $1\leq j\leq s$, then the claim follows. Otherwise, we have
\begin{align*}
\frac\alpha\beta x\in B(\alpha \beta^{s-1}\gamma), \; \cdots, \;(\frac\alpha\beta)^{s}x\in B(\alpha^s \gamma).
\end{align*}
Since $\beta\nmid\alpha^s\gamma$, similar to Case I above, we also obtain $(\frac\alpha\beta)^{s+1}x\in C$.

Based on the above, we complete the proof of \eqref{q6}.

Clearly, for all $x\in[1,\frac\alpha\beta]$, there exists a constant $q_2>q_1+2$ such that
\begin{align*}
(\frac\alpha\beta)^{q_1+2}x \in C \cup (\cup_{j=1}^{\beta^{q_2-q_1-2}}B(j)).
\end{align*}
Hence, $\Delta(q_2+1,x)\geq 2$ is equivalent to
\begin{align}\label{(8.1)}
\#\left\{j:q_1+2\leq j\leq q_2+1\;\mathrm{and}\;||(\frac\alpha\beta)^jx||\geq \frac1{\alpha\beta}\right\}\geq1.
\end{align}
The proof of \eqref{(8.1)} is divided into the following three cases.

{\bf Case I}: $(\frac\alpha\beta)^{q_1+2}x\in C$. Then \eqref{(8.1)} holds.

{\bf Case II}: $(\frac\alpha\beta)^{q_1+2}x\in B(i)$ with $\beta\nmid i$. Analogous to Case I in our discussion of \eqref{q6}, we have $(\frac\alpha\beta)^{q_1+3}x\in C$.
This implies that $\eqref{(8.1)}$ holds.

{\bf Case III}: $(\frac\alpha\beta)^{q_1+2}x\in B(i)$ with $\beta|i$ and $i\leq \beta^{q_2-q_1-2}$. Similar to Case II in our discussion of \eqref{q6}, we also obtain \eqref{(8.1)}.

Summarizing the above three cases, we know that $\Delta(q_2+1,x)\geq 2$.

Note that
\begin{align*}
(\frac\alpha\beta)^{q_2+2}x \in C \cup (\cup_{j=1}^{\beta^{q_3-q_2-2}}B(j))
\end{align*}
for some integer $q_3>q_2+2$.  Arguing similarly for \eqref{(8.1)}, we find that
\begin{align*}
\#\Big\{j:q_2+2\leq j\leq q_3+1\;\mathrm{and}\;||(\frac\alpha\beta)^jx||\geq\frac1{\alpha\beta}\Big\}\geq1
\end{align*}
for all $x\in[1,\frac\alpha\beta]$. Hence   $\Delta(q_3+1,x)\geq 3$ for all $x\in[1,\frac\alpha\beta]$.

When $n\geq4$, we see from the above argument that there exists an integer $q_n$ such that $\Delta(q_n+1,x)\geq n$ for all $x\in[1,\frac\alpha\beta]$.
Summarizing, the proof of this lemma is complete.
\end{proof}

We are now in a position to prove Theorem \ref{thm1.5}.

\begin{proof}[Proof of Theorem \ref{thm1.5}]
According to \cite[Theorem 1.2]{AHH19}, we only need to consider the case $p\in 3\Bbb N$ and $\{a_n,b_n\}=\{-1,1\}\pmod 3$ for $n\geq1$. Without loss of generality, we assume that $\omega_n=\omega_1+(n-1)d$ with $d\in \mathbb N^{+}$ and $b_{\omega_n}=c\cdot q^{\omega_n}$.

{\bf Step 1. Constructing maximal orthogonal set by  maximal mapping.}
Let $\Lambda$ be a maximal orthogonal set of $\mu_{\{p_n\},\{D_n\}}$.  It follows from Theorem \ref{thm4.2} that there exists a maximal mapping $\iota$ such that $\Lambda=\frac13 \iota^*(A^{\iota})$. For positive integers $n$ and $k$, write
\begin{align*}
Q_{n,k}(\xi)=\sum_{\textbf{i}\in I^k_n}|\hat\mu_{\{p_n\},\{\mathcal{D}_n\}}(\xi+\frac13\iota^*(\textbf i))|^2 \quad\mbox{and}\quad Q(\xi)=\sum_{\textbf{i}\in A^{\iota}}|\hat\mu_{\{p_n\},\{\mathcal{D}_n\}}(\xi+\frac13\iota^*(\textbf i))|^2,
\end{align*}
where $I^k_n=\{\textbf{i}\in A^{\iota}:|\textbf{i}|\leq (n d)^k\}$.

{\bf Step 2. Estimate the tail term. }
Let $\mu_{>n}$ be given in \eqref{w1}. We write $\frac{q^d}{p^d}=\frac\beta\alpha$, where $\gcd(\alpha,\beta)=1$. It is easy to check that
\begin{align}\label{(8.3)}
|\hat\mu_{>((n+1)d)^k}(\xi+\frac 1 3 \iota^*(\textbf{i}))|&\leq  \prod_{j=1}^{\infty}\frac 1 3 (1+|1+\exp\{{-2\pi i\frac{b_{((n+1)d)^k+jd+\omega_1}}{P_{((n+1)d)^k+jd+\omega_1}}}(\xi+\frac13\iota^*(\mathbf{i}))\}|)\nonumber\\
&= \prod_{j=1}^{\infty}\frac 1 3 (1+|1+\exp\{-2\pi i(\frac {q}{p})^{jd}\frac{cq^{((n+1)d)^k+\omega_1}(\xi+\frac13\iota^*(\mathbf{i}))}{p^{((n+1)d)^k+\omega_1}}\}|)\nonumber\\
&:=\prod_{j=1}^{\infty}\frac 1 3 (1+|1+\exp\{-2\pi i (\frac\beta\alpha)^j {y(n,k,\xi,\textbf{i})}\}|).
\end{align}
Define $ I^k_{n,n+1}=\{\textbf{i}\in A^{\iota}:(nd)^k<|\textbf{i}|\leq ((n+1)d)^k\}$. For any $\mathbf{i}\in I^k_{n,n+1}$, let $\ell$ be the largest index such that $\iota(\textbf{i}0^{\infty}|_{\ell})\neq 0$. It is easy to see that $\ell\geq (nd)^k+1$ and
\begin{align*}
|\iota^*(\textbf{i})|=\left|\sum_{s=1}^{\infty}\iota(\textbf{i}0^\infty|_s)|P_s|\right|\geq |P_{\ell}|-\sum_{k=1}^{\ell-1}(|p_{k+1}|-2)|P_k|\geq |P_{(nd)^k}|=p^{(nd)^k}.
\end{align*}
Thus, for any $\xi\in[0,\frac 1 3]$, $k\in\Bbb N$, large $n$ and $\textbf{i}\in I^k_{n,n+1}$, we have
\begin{align}\label{(8.4)}
|y(n,k,\xi,\textbf{i})|=\left|\frac{cq^{((n+1)d)^k+\omega_1}(\xi+\frac13\iota^*(\mathbf{i}))}{p^{((n+1)d)^k+\omega_1}}\right|\geq\frac{cq^{((n+1)d)^k+\omega_1}}{6p^{((n+1)d)^k-(nd)^k+\omega_1}}\geq q^{n^k}.
\end{align}

Choose a positive integer $r(n,k,\xi,\textbf{i})$ satisfying $(\frac\beta\alpha)^{r(n,k,\xi,\textbf{i})}|y(n,k,\xi,\textbf{i})|\in(1,\frac\alpha\beta]$.
According to Lemma \ref{lem4.4}, there exists a strictly increasing sequence $\{j_i\}_{i=1}^{s(n,k,\xi,\textbf{i})}\subset\{0,1,\cdots,r(n,k,\xi,\textbf{i})\}$ such that
\begin{align*}
\|(\frac \beta\alpha)^{j_i} y(n,k,\xi,\textbf{i})\|\geq \frac1{\alpha\beta}\quad \mbox{and}\quad |(\frac\beta\alpha)^{j_i} y(n,k,\xi,\textbf{i})|>1.
\end{align*}
It follows from \eqref{(8.3)} that
\begin{align}\label{(8.5)}
|\hat\mu_{>((n+1)d)^k}(\xi+\frac 1 3 \iota^*(\textbf{i}))|\leq \gamma^{s(n,k,\xi,\textbf{i})},
\end{align}
where $\gamma=\max\{\frac13(1+|1+e^{-2\pi i y}|):{\|y\|\geq \frac1{\alpha\beta}}\}$.

{\bf Step 3. The key in the proof is to estimate the decay of $\gamma^{s(n,k,\xi,\textbf{i})}$ when $n$ is large enough.}
For simplicity, denote
\begin{align*}
X_j=|(\frac\beta\alpha)^jy(n,k,\xi,\textbf{i})|:=\alpha^{u_j}v_j+\varepsilon_j,\quad j=0,1,\cdots, r(n,k,\xi,\textbf{i}),
\end{align*}
where $u_j\in\mathbb{N}$, $v_j\in\mathbb{N}\setminus\alpha\mathbb{N}$ and $\varepsilon_j\in(-\frac12,\frac12]$. Let $q_1$ be the smallest positive integer such that $[\frac\alpha\beta]+1\leq \beta^{q_1}$.
 From Lemma \ref{lem4.4} again, we know that $j_{s(n,k,\xi,\textbf{i})}\geq r(n,k,\xi,\textbf{i})-q_1-1$. Hence
\begin{align}\label{(8.6)}
|(\frac\beta\alpha)^{q_1+3}X_{j_{s(n,k,\xi,\textbf{i})}}|\leq|(\frac\beta\alpha)^{2}X_{r(n,k,\xi,\textbf{i})}|<1.
\end{align}
We now establish the following recursive inequality,
\begin{align}\label{(8.7)}
X_{j_{i+1}}\geq(\frac\beta\alpha)^2X_{j_i}^{\frac{\log\beta}{\log\alpha}},\quad i=1,2,\cdots, s(n,k,\xi,\textbf{i})-1,
\end{align}
by considering the following two cases.

{\bf Case I}: $j_{i+1}=j_i+1$.  It is easy to see that
\begin{align*}
X_{j_{i+1}}=X_{j_i+1}>(\frac\beta\alpha)^{2}X_{j_i}>(\frac\beta\alpha)^{2}X_{j_i}^{\frac{\log\beta}{\log\alpha}}.
\end{align*}

{\bf Case II}: $j_{i+1}\geq j_i+2$. Then $|\varepsilon_{j_i+1}|<\frac1{\alpha\beta}$ and
\begin{align}\label{(8.8)}
||(\frac\beta\alpha)^{u_{j_i+1}+1}X_{j_i+1}||=||\frac{\beta^{u_{j_i+1}+1}v_{j_i+1}}\alpha+(\frac\beta\alpha)^{u_{j_i+1}+1}\varepsilon_{j_i+1}||\geq\frac1{\alpha\beta}.
\end{align}
The above inequality implies
\begin{align}\label{(8.9)}
j_{i+1}\leq {j_i+2+u_{j_i+1}}.
\end{align}
Note that
\begin{align}\label{(8.10)}
X_{j_i}=\frac\alpha\beta X_{j_i+1}=\frac\alpha\beta(\alpha^{u_{j_i+1}}v_{j_i+1}+\varepsilon_{j_i+1})\geq\alpha^{u_{j_i+1}}.
\end{align}
Combining \eqref{(8.9)} with \eqref{(8.10)}, we have
\begin{align*}
X_{j_{i+1}}\geq X_{j_i+2+u_{j_i+1}}\geq(\frac\beta\alpha)^{2+u_{j_i+1}}X_{j_i}\geq(\frac\beta\alpha)^{2}X_{j_i}^{\frac{\log\beta}{\log\alpha}-1}X_{j_i}=(\frac\beta\alpha)^{2}X_{j_i}^{\frac{\log\beta}{\log\alpha}}.
\end{align*}
Summarizing the above two cases, \eqref{(8.7)} follows.

Applying \eqref{(8.6)} and \eqref{(8.7)}, we obtain
\begin{align*}
(\frac\alpha\beta)^{q_1+3}\geq X_{j_{s(n,k,\xi,\textbf{i})}}&\geq (\frac \beta\alpha)^{2\sum_{n=1}^{s(n,k,\xi,\textbf{i})-2}(\frac{\log\beta}{\log\alpha})^n}X_{j_1}^{(\frac{\log \beta}{\log \alpha})^{s(n,k,\xi,\textbf{i})-1}}\geq \frac 1 {\alpha^2}X_{j_1}^{(\frac{\log \beta}{\log \alpha})^{s(n,k,\xi,\textbf{i})-1}}.
\end{align*}
Hence
\begin{align}\label{(8.11)}
j_1\log{\frac\beta\alpha}+\log{|y(n,k,\xi,\textbf{i})|}\leq (\log{\frac{\alpha^{q_1+5}}{\beta^{q_1+3}}})(\frac{\log \alpha}{\log \beta})^{s(n,k,\xi,\textbf{i})-1}.
\end{align}
Note that $j_1$ may be large enough when $n$ is large enough. In the following, we will prove that
\begin{align*}
j_1\leq2+\frac{\log |y(n,k,\xi,\textbf{i})|}{\log \alpha}.
\end{align*}
We only need to consider the case $j_1\geq 2$. Obviously,
\begin{align*}
X_j=(\frac\beta\alpha)^{j-1}X_1=\frac{\beta^{j-1}v_1}{\alpha^{j-1-u_1}}+(\frac\beta\alpha)^{j-1}\varepsilon_1,\quad j\geq2.
\end{align*}
Since $|\varepsilon_1|<\frac 1{\alpha\beta}, \frac{q^{d}}{p^{d}}=\frac{\beta}{\alpha}$ and $p>q$, we know $|\varepsilon_j|=|(\frac\beta\alpha)^{j-1}\varepsilon_1|<\frac1{\alpha\beta}$ for  $j\leq u_1+1$. Replacing $j_i$ by $0$ in \eqref{(8.8)}, we obtain
$|\varepsilon_{u_1+2}|\geq\frac 1{36}$. This implies $j_1=u_1+2$.  Similar to \eqref{(8.10)}, we have $u_1\log \alpha\leq\log |y(n,k,\xi,\textbf{i})|$. Hence the desired result follows, and
\begin{align*}
j_1\log{\frac\beta\alpha}+\log{|y(n,k,\xi,\textbf{i})|}&\geq\frac{\log \beta}{\log \alpha}\log{|y(n,k,\xi,\textbf{i})|}+2\log{\frac\beta\alpha}.
\end{align*}
This inequality, \eqref{(8.4)} and \eqref{(8.11)} show that there exists an integer $n_1$(depends only on $k$) such that
\begin{align}\label{(8.12)}
(\frac{\log \beta}{\log \alpha})^{s(n,k,\xi,\textbf{i})}\leq \frac {\log{\frac{\alpha^{q_1+5}}{\beta^{q_1+3}}}} {\log{|y(n,k,\xi,\textbf{i})|}}\leq\frac{\log{\frac{\alpha^{q_1+5}}{\beta^{q_1+3}}}}{n^k\log q}:=\frac{c_1}{n^k},\quad n\geq n_1.
\end{align}

{\bf Final step}.
Put $\zeta=\log \gamma/\log\frac{\log\beta}{\log\alpha}$. Combining \eqref{(8.5)} with \eqref{(8.12)}, we obtain
\begin{align*}
|\hat\mu_{>(d(n+1))^k}(\xi+\frac 1 3 \iota^*(\textbf{i}))|\leq {(\frac{\log\beta}{\log\alpha})^{\zeta{s(n,k,\xi,\textbf{i})}}}\leq \frac{c_1^{\zeta}}{n^{k\zeta}},\quad n\geq n_1,
\end{align*}
for any $\xi\in[0,\frac13]$, $k\in\Bbb N$ and $\mathbf{i}\in I^k_{n,n+1}$. Therefore
\begin{align*}
Q_{n+1,k}(\xi)&= Q_{n,k}(\xi)+\sum_{\textbf{i}\in I^k_{n,n+1}}|\hat\mu(\xi+\frac 1 3 \iota^*(\textbf i))|^2\nonumber\\
&=Q_{n,k}(\xi)+\sum_{\textbf{i}\in I^k_{n,n+1}}|\hat\mu_{(d(n+1))^k}(\xi+\frac 1 3 \iota^*(\textbf i))|^2\cdot |\hat\mu_{>(d(n+1))^k}(\xi+\frac 1 3 \iota^*(\textbf i))|^2\nonumber\\
&\leq Q_{n,k}(\xi)+\frac{c_1^{2\zeta}}{n^{2k\zeta}}\sum_{\textbf{i}\in I^k_{n,n+1}}|\hat\mu_{(d(n+1))^k}(\xi+\frac 1 3 \iota^*(\textbf i))|^2\nonumber\\
&\leq Q_{n,k}(\xi)+\frac{c_1^{2\zeta}}{n^{2k\zeta}}(1-\sum_{\textbf{i}\in I^k_n}|\hat\mu_{(d(n+1))^k}(\xi+\frac 1 3 \iota^*(\textbf i))|^2)\quad (\text{by Lemma \ref{lem4.3}})\nonumber\\
&\leq Q_{n,k}(\xi)+\frac{c_1^{2\zeta}}{n^{2k\zeta}}(1-Q_{n,k}(\xi)).
\end{align*}
for $n\geq n_1$. By recursion, we conclude that
\begin{align*}
1-Q_{n+1,k}(\xi)\geq (1-Q_{n_1,k}(\xi))\prod_{j=n_1}^{n}(1-\frac{c_1^{2\zeta}}{n^{2k\zeta}}).
\end{align*}
Taking $k=[\frac 1 {\zeta}]+1$, we see from the above inequality that
\begin{align*}
1-Q(\xi)\geq C(1-Q_{n_1,k}(\xi)),
\end{align*}
where $C=\prod_{j=n_1}^{\infty}(1-\frac{c_1^{2\beta}}{n^{2k\beta}})>0$. Hence $Q(\xi)\not\equiv1$ on $[0,\frac13]$.  From Theorem \ref{lem2.1}, $\Lambda$ is not a spectrum of $\mu_{\{p_n\},\{\mathcal{D}_n\}}$. The proof is complete.
\end{proof}

At the end of this section, we compare our Theorem \ref{thm1.5} with \cite[Theorem 1.4]{AHH19}.
\begin{thm}\cite[Theorem 1.4 or Lemma 3.5]{AHH19}\label{thm4.5}
Let $\{q_n,p_n\}_{n=1}^{\infty}$ be a sequence of positive integers satisfying that $q_n<p_n\leq M$ and $\frac{q_n\cdots q_m}{\gcd (p_n\cdots p_m, q_n\cdots q_m)}\geq 2^{m-n+1}$ for some positive number $M$ and all $m\geq n\geq 1$. Then, for any co-prime integers $a_n,b_n$ so that $q_1q_2\cdots q_n\in\{a_n,b_n\}$, the $\mu_{\{p_n\},\{0,a_n,b_n\}}$ is not a spectral measure.
\end{thm}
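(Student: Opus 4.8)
The plan is to run the argument of Theorem \ref{thm1.5} almost verbatim, with the sparse geometric family $c\cdot q^{\omega_n}$ there replaced by the full family $q_1q_2\cdots q_n$, and with the fixed-base dynamics $x\mapsto(\frac\beta\alpha)x$ of Lemma \ref{lem4.4} replaced by the variable-base dynamics $x\mapsto\frac{q_{j+1}}{p_{j+1}}x$. First I would reduce, via \cite[Theorem 1.2]{AHH19}, to the case $3\mid p_n$ and $\{a_n,b_n\}\equiv\{1,2\}\pmod 3$ with $a_n<b_n$ for all $n$, and write $c_n$ for whichever of $a_n,b_n$ equals $q_1q_2\cdots q_n$. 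By Theorem \ref{thm4.2}, every maximal orthogonal set of $\mu:=\mu_{\{p_n\},\{\mathcal D_n\}}$ has the form $\Lambda=\frac13\iota^*(A^\iota)$ for a maximal mapping $\iota$; I fix such a $\Lambda$ and, exactly as in the proof of Theorem \ref{thm1.5}, introduce $Q(\xi)=\sum_{\mathbf i\in A^\iota}|\hat\mu(\xi+\frac13\iota^*(\mathbf i))|^2$ together with the truncations $Q_{n,k}(\xi)=\sum_{\mathbf i\in I^k_n}|\hat\mu(\xi+\frac13\iota^*(\mathbf i))|^2$ over $I^k_n=\{\mathbf i\in A^\iota:|\mathbf i|\le n^k\}$ and the annuli $I^k_{n,n+1}=\{\mathbf i\in A^\iota:n^k<|\mathbf i|\le(n+1)^k\}=I^k_{n+1}\setminus I^k_n$. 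By Lemma \ref{lem2.1} it then suffices to exhibit one $\xi_0\in[0,\frac13]$ with $Q(\xi_0)<1$.

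The core is the tail estimate. For $\mathbf i\in I^k_{n,n+1}$ the telescoping inequality already used in the proof of Theorem \ref{thm1.5} gives $|\iota^*(\mathbf i)|\ge P_{n^k}$, so for $\xi\in[0,\frac13]$ and $\lambda=\frac13\iota^*(\mathbf i)$ the quantity $|\xi+\lambda|$ has order between $P_{n^k}$ and $P_{(n+1)^k}$. Using $|M_{\mathcal D_j}(\frac{\xi+\lambda}{P_j})|\le\frac13(1+|1+e^{-2\pi i\frac{c_j}{P_j}(\xi+\lambda)}|)=\frac13(1+2|\cos(\pi\frac{c_j}{P_j}(\xi+\lambda))|)$ for every $j$ (valid for whichever of $a_j,b_j$ equals $c_j$), we get $|\hat\mu_{>(n+1)^k}(\xi+\lambda)|\le\gamma^{\,s(n,k,\xi,\mathbf i)}$, where $\gamma:=\max\{\frac13(1+2|\cos\pi\theta|):\|\theta\|\ge\eta\}<1$ for a fixed $\eta\in(0,\frac12)$ and $s(n,k,\xi,\mathbf i):=\#\{j>(n+1)^k:\|\frac{c_j}{P_j}(\xi+\lambda)\|\ge\eta\}$. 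Setting $t_j:=\frac{c_j}{P_j}(\xi+\lambda)$, so that $t_{j+1}=\frac{q_{j+1}}{p_{j+1}}t_j$, the lower bound $|t_{n^k}|\gtrsim q_1\cdots q_{n^k}\ge 2^{n^k}$ (the last inequality being a special case of the hypothesis) together with $q_j<p_j\le M$ shows that $|t_j|$ decreases from roughly $2^{n^k}$ down to $O(1)$ over a block of $r\asymp n^k$ consecutive indices $j$ just past $(n+1)^k$. The decisive point — a variable-base analogue of Lemma \ref{lem4.4} — is that the hypothesis $\frac{q_n\cdots q_m}{\gcd(p_n\cdots p_m,\,q_n\cdots q_m)}\ge 2^{m-n+1}$ forbids the orbit $(t_j)$ from staying inside the $\eta$-neighbourhood of $\mathbb{Z}$ for too long, and forces $\|t_j\|\ge\eta$ for at least $\gtrsim\log r\asymp k\log n$ of those indices; hence $s(n,k,\xi,\mathbf i)\ge c\,k\log n$ uniformly in $\xi\in[0,\frac13]$ and $\mathbf i\in I^k_{n,n+1}$, and therefore $|\hat\mu_{>(n+1)^k}(\xi+\lambda)|\le\gamma^{\,c\,k\log n}\le n^{-c'k}$ for $n$ large.

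Finally I would feed this into Lemma \ref{lem4.3} exactly as in the proof of Theorem \ref{thm1.5}: using $\mu=\mu_{(n+1)^k}*\mu_{>(n+1)^k}$, the tail bound, and Lemma \ref{lem4.3}, one obtains $Q_{n+1,k}(\xi)\le Q_{n,k}(\xi)+n^{-2c'k}(1-Q_{n,k}(\xi))$ for all $n\ge n_1$ and $\xi\in[0,\frac13]$, whence by iteration $1-Q(\xi)\ge(1-Q_{n_1,k}(\xi))\prod_{j\ge n_1}(1-j^{-2c'k})$. Choosing $k$ so large that $2c'k>1$ makes the product a positive constant $C$. Now $Q_{n_1,k}$ is entire and is not identically $1$ on $[0,\frac13]$ (otherwise the finite set $\{\frac13\iota^*(\mathbf i):\mathbf i\in I^k_{n_1}\}$ would be a spectrum of the infinite-dimensional space $L^2(\mu)$), so $Q_{n_1,k}(\xi_0)<1$ for some $\xi_0\in[0,\frac13]$, and then $Q(\xi_0)\le 1-C(1-Q_{n_1,k}(\xi_0))<1$. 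By Lemma \ref{lem2.1}, $\Lambda$ is not a spectrum of $\mu$; since $\Lambda$ was an arbitrary maximal orthogonal set, $\mu_{\{p_n\},\{\mathcal D_n\}}$ is not spectral. The step I expect to be the main obstacle is the counting bound $s(n,k,\xi,\mathbf i)\gtrsim k\log n$: unlike in Theorem \ref{thm1.5} the multiplier $\frac{q_{j+1}}{p_{j+1}}$ varies with $j$, so one cannot simply reduce to powers of a single rational, and $|t_j|>1$ by itself says nothing about $\|t_j\|$; turning the $\gcd$-growth hypothesis into the required density of ``non-integer-approximating'' terms requires a careful bookkeeping of the reduced denominators of $q_1\cdots q_j/(p_1\cdots p_j)$ against the large integer $\iota^*(\mathbf i)$, which is exactly the role the condition $\frac{q_n\cdots q_m}{\gcd(p_n\cdots p_m,\,q_n\cdots q_m)}\ge 2^{m-n+1}$ is designed to play.
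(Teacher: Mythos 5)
You should note at the outset that the paper itself does not prove Theorem \ref{thm4.5}: it is quoted from \cite{AHH19} (Theorem 1.4/Lemma 3.5 there) purely for comparison with Theorem \ref{thm1.5}, so the only internal benchmark is the paper's proof of Theorem \ref{thm1.5}. Your proposal reproduces that proof's architecture faithfully and correctly: the reduction via \cite[Theorem 1.2]{AHH19}, the maximal-mapping representation of an arbitrary maximal orthogonal set, the quantities $Q_{n,k}$ and the annuli $I^k_{n,n+1}$, the lower bound $|\iota^*(\mathbf i)|\ge P_{n^k}$, the one-term estimate $|M_{\mathcal D_j}|\le\frac13\bigl(1+2|\cos(\pi c_j(\xi+\lambda)/P_j)|\bigr)$ with $c_j=q_1\cdots q_j$, and the final recursion through Lemma \ref{lem4.3} together with the (correct, and slightly more careful than the paper's) observation that $Q_{n_1,k}\not\equiv 1$. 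The quantitative target $s(n,k,\xi,\mathbf i)\gtrsim k\log n$ is also exactly the right one; it matches \eqref{(8.12)}.

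The genuine gap is the one you flag yourself: the counting bound $s(n,k,\xi,\mathbf i)\gtrsim k\log n$ is asserted, not proved, and it carries essentially all of the mathematical content of the theorem. In the constant-base case this bound is obtained in two separate steps: Lemma \ref{lem4.4}, an escape argument showing that the orbit of $x\mapsto\frac{\alpha}{\beta}x$ cannot stay within $\frac1{\alpha\beta}$ of $\mathbb Z$ for more than about $\log_\beta(\text{current size})$ consecutive steps (using $\gcd(\alpha,\beta)=1$ and the $\beta$-adic valuation of the nearby integer); and Step 3 of the proof of Theorem \ref{thm1.5}, where the recursive inequality \eqref{(8.7)} converts the gap bound $j_{i+1}\le j_i+2+u_{j_i+1}$ into the contraction of $\log X_{j_i}$ by the factor $\frac{\log\beta}{\log\alpha}$ at each good index, whence $s\gtrsim\log\log|y|$. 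Neither step transfers verbatim when the multiplier $\frac{q_{j+1}}{p_{j+1}}$ varies with $j$: there is no single coprime pair $(\alpha,\beta)$, the reduced denominator of $Q_j/P_j$ (with $Q_j=q_1\cdots q_j$) can fluctuate, and the hypothesis $\frac{q_n\cdots q_m}{\gcd(p_n\cdots p_m,\,q_n\cdots q_m)}\ge 2^{m-n+1}$ — a statement about arbitrary windows $[n,m]$ — must be converted into both an escape time bound and a contraction recursion for the variable-base orbit. Your proposal names this as ``careful bookkeeping'' but supplies neither ingredient, so the argument is incomplete precisely at the point where the hypothesis of the theorem is actually used; as written it would prove nothing without that lemma.
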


If we assume $q_n=q$ and $p_n=p$, Theorem \ref{thm4.5} becomes the following result.

\begin{thm}\cite{AHH19}\label{thm4.6}
Let $p,q\in \Bbb N^+$ with $p>q\geq 2\gcd (p,q)$. Then, for any co-prime integers $a_n,b_n$ so that $q^n\in\{a_n,b_n\}$, the $\mu_{p,\{0,a_n,b_n\}}$ is not a spectral measure.
\end{thm}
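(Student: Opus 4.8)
The plan is to obtain Theorem~\ref{thm4.6} as a direct specialization of Theorem~\ref{thm4.5}. First I would set $q_n=q$ and $p_n=p$ for every $n\geq 1$ and take the bounding constant $M=p$; then the condition $q_n<p_n\leq M$ in Theorem~\ref{thm4.5} is nothing but the standing hypothesis $q<p$. It therefore remains only to check the multiplicative growth condition
\begin{align*}
\frac{q_n\cdots q_m}{\gcd(p_n\cdots p_m,\,q_n\cdots q_m)}\geq 2^{m-n+1}\qquad\text{for all }m\geq n\geq 1 .
\end{align*}
Writing $k=m-n+1\geq 1$, the left-hand side is $q^{k}/\gcd(p^{k},q^{k})$, so the requirement reduces to $q^{k}/\gcd(p^{k},q^{k})\geq 2^{k}$ for every $k\geq 1$.

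The one elementary fact I would use is $\gcd(p^{k},q^{k})=\gcd(p,q)^{k}$: putting $d=\gcd(p,q)$ and writing $p=da$, $q=db$ with $\gcd(a,b)=1$, one gets $p^{k}=d^{k}a^{k}$, $q^{k}=d^{k}b^{k}$ and $\gcd(a^{k},b^{k})=1$, so $\gcd(p^{k},q^{k})=d^{k}$. Hence
\begin{align*}
\frac{q^{k}}{\gcd(p^{k},q^{k})}=\Bigl(\frac{q}{\gcd(p,q)}\Bigr)^{\!k}\geq 2^{k},
\end{align*}
because $q\geq 2\gcd(p,q)$ by assumption. Thus the constant sequences $q_n\equiv q$, $p_n\equiv p$ satisfy all the hypotheses of Theorem~\ref{thm4.5}. (Here, as in Theorem~\ref{thm4.5}, we tacitly assume the $a_n,b_n$ are chosen so that $\mu_{p,\{0,a_n,b_n\}}$ is a well-defined Moran measure, i.e.\ $\sum_{n}p^{-n}\max\{|a_n|,|b_n|\}<\infty$ by Theorem~\ref{thm2.3}.)

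Finally, since $q_1q_2\cdots q_n=q^{n}$, the digit hypothesis ``$q_1q_2\cdots q_n\in\{a_n,b_n\}$'' in Theorem~\ref{thm4.5} is exactly ``$q^{n}\in\{a_n,b_n\}$''. Applying Theorem~\ref{thm4.5} then gives that for any coprime integers $a_n,b_n$ with $q^{n}\in\{a_n,b_n\}$ the measure $\mu_{p,\{0,a_n,b_n\}}=\mu_{\{p_n\},\{0,a_n,b_n\}}$ is not a spectral measure, which is precisely the assertion of Theorem~\ref{thm4.6}. The only step needing any care is the power identity $\gcd(p^{k},q^{k})=\gcd(p,q)^{k}$; everything else is bookkeeping, so I do not expect a genuine obstacle.
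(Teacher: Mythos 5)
Your proposal is correct and follows exactly the route the paper intends: the paper derives Theorem~\ref{thm4.6} by simply specializing Theorem~\ref{thm4.5} to the constant sequences $q_n\equiv q$, $p_n\equiv p$, and your verification that $q^{k}/\gcd(p^{k},q^{k})=(q/\gcd(p,q))^{k}\geq 2^{k}$ under $q\geq 2\gcd(p,q)$ is precisely the (omitted) check that makes this specialization legitimate. No gap.
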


Clearly, our Theorem \ref{thm1.5} only need the conditions $p>q\geq 2$ and $c\cdot q^{\omega_n}\in\{a_{\omega_n},b_{\omega_n}\}$ which are both weaker than \cite[Theorem 1.4]{AHH19}.
Applying Theorem \ref{thm1.5} directly, we have the following non-spectral example.
\begin{ex}
Let $a_n=1$,\;$b_n=\begin{cases}
2^n, & n \text{ is odd,}\\
2^{n-1}, & n \text{ is even.}\end{cases}$, $p_n=3$ for $n\geq 1$. Then $\mu_{3,\{0,1,b_n\}}$ is a Moran measure, but not a spectral measure.
\end{ex}

\end{document}